\newtheorem{theorem}{Theorem}[section]
\newtheorem{lemma}[theorem]{Lemma}
\newtheorem{corollary}[theorem]{Corollary}
\newtheorem{proposition}[theorem]{Proposition}
\theoremstyle{definition}
\newtheorem{definition}[theorem]{Definition}
\numberwithin{equation}{section}
\theoremstyle{definition}
\newcommand{\R}{\Bbb R}
\def\eps{\varepsilon}
\def\e{{\rm e}}
\def\weakly{\rightharpoonup}
\def\weaklystar{\stackrel{*}{\rightharpoonup}}
\newcommand{\ue}{u^\epsi}
\newcommand{\epsi}{\eps}
\renewcommand{\div}{\operatorname{div}}
\begin{document}

\title[A variational approach to Navier-Stokes]{A variational approach to Navier-Stokes}

\author{Michael Ortiz}
\address[Michael Ortiz]{California Institute of Technology, Graduate Aeronautical
  Laboratories, M.S. 105-50, Pasadena, CA 91125 USA and Institute for
  Applied Mathematics, Universit\"at Bonn, Endenicher Allee 60
D-53115 Bonn, Germany.}
\email{ortiz@aero.caltech.edu }
\urladdr{http://www.ortiz.caltech.edu}

\author{Bernd Schmidt}
\address[Bernd Schmidt]{Universit{\"a}t Augsburg, Institut f{\"u}r Mathematik, 
Universit{\"a}tsstr.\ 14, 86159 Augsburg, Germany.}
\email{bernd.schmidt@math.uni-augsburg.de}
\urladdr{http://www.math.uni-augsburg.de/ana/schmidt.html}

\author{Ulisse Stefanelli}
\address[Ulisse Stefanelli]{Faculty of Mathematics, University of Vienna, 
Oskar-Morgenstern-Platz 1, 1090 Wien, Austria  and  Istituto di Matematica
Applicata e Tecnologie Informatiche \textit{{E. Magenes}}, v. Ferrata 1, 27100
Pavia, Italy.}
\email{ulisse.stefanelli@univie.ac.at}
\urladdr{http://www.mat.univie.ac.at/$\sim$stefanelli}

\subjclass[2010]{35Q30, 76D05}
\keywords{Navier-Stokes system, variational method, weak solutions} 
 
\begin{abstract}
 
We present a variational resolution of the incompressible
Navier-Stokes system by means of stabilized
Weighted-Inertia-Dissipation-Energy (WIDE) functionals. The
minimization of these parameter-dependent functionals corresponds to an elliptic-in-time regularization of the
system. By passing to the limit in the regularization parameter along
subsequences of WIDE minimizers  one recovers a classical Leray-Hopf
weak solution.

\end{abstract}

\maketitle

\tableofcontents

\section{Introduction}\label{sec:Intro}

This note is concerned with the 
Navier-Stokes system 
\begin{align}\label{eq:NSE-with-p}
  \partial_t u  +    u \cdot \nabla u   - \nu\Delta u 
  +  \nabla p = 0, \quad \operatorname{div} u 
  = 0, 
\end{align} 
describing the flow velocity $u:\Omega \times (0,\infty) \to \R^3$ of
an incompressible viscous fluid in a container
$\Omega \subset \R^3$. Note that we use the classical notation
$u\cdot \nabla u =   u_j \, \partial_{x_j} u$ (sum over repeated indices). We advance a
variational approach to the existence of classical {\it Leray-Hopf}
weak solutions \cite{temam84} to system
\eqref{eq:NSE-with-p} by minimizing the  
functionals $I^\eps$ on entire trajectories  
\begin{align}
  I^{\eps}(u) 
  &= \int_0^{\infty}\!\!\int_\Omega  \e^{-t/\eps}\left\{\frac{1}{2}
    |\partial_t u + u \cdot \nabla u |^2 
     + \frac{\sigma}{2} |u \cdot \nabla u |^2
     + \frac{\nu}{2\eps} |\nabla u |^2
  \right\}\, dx \, dt \label{I}
\end{align}
under the incompressibility constraint $\div u=0$ and for given initial
and (homogeneous Dirichlet) boundary 
conditions. Here, $\sigma > 0$ is a constant and $ \eps >0$ is a small parameter, eventually tending
to $0$. 

The relation between the minimization of $I^\eps$ and the Navier-Stokes
system \eqref{eq:NSE-with-p} is revealed by formally computing the
Euler-Lagrange equation for $I^\eps$ at a critical point $\ue$, namely
  \begin{align}\label{EL}
    \begin{split}
      0&=\partial_t \ue + \ue \cdot \nabla \ue -\nu  \Delta \ue + \nabla p \\
      & + \epsi \big(-\partial_t (\partial_t\ue + \ue \cdot \nabla
      \ue)- \div ((\partial_t\ue + \ue \cdot \nabla \ue) \otimes \ue)
      + (\nabla \ue)^\top
      (\partial_t\ue + \ue \cdot \nabla \ue) \big) \\
      & + \epsi \sigma \big( -\div ((\ue\cdot \nabla \ue) \otimes\ue)
      +(\nabla \ue)^\top (\ue \cdot \nabla \ue) \big).
    \end{split}
  \end{align} 
Note that the first line above is nothing but the first equation in
\eqref{eq:NSE-with-p}. The term $\nabla p $ is the Lagrangian
multiplier corresponding to the constraint $\div u=0$. The  second and
third lines in \eqref{EL} feature terms
premultiplied by the small parameter $\eps$. By taking $\epsi
\to 0$ in the Euler-Lagrange equations \eqref{EL} one formally recovers a
solution of \eqref{eq:NSE-with-p}. This paper is devoted to make this
program rigorous. Our aim is to prove that 
\vspace{3mm}

\begin{center}
  \begin{minipage}{0.8\linewidth}
    The functional $I^\eps$ admits minimizers $\ue$ for all $\epsi>0$
    (Proposition \ref{prop:min-ex}) and, up to
    subsequences, such minimizers converge to a Leray-Hopf
    solution of the Navier-Stokes system as $\eps \to 0$ (Theorem \ref{theo:main}).
  \end{minipage}
\end{center}
\vspace{3mm}

The $\eps$-dependent part of the Euler-Lagrange equations \eqref{EL}
features the term
$-\epsi \,\partial^2_t \ue$ as well. The minimization of  $I^\eps$ hence
corresponds to performing an
{\it elliptic  regularization} in time of the Navier-Stokes system
\eqref{eq:NSE-with-p}.  In particular, the minimizers $\ue$ of
$I^\eps$ are more regular in time with respect to the limiting Leray-Hopf solutions.

Elliptic regularizations of evolution problems have been
introduced by {\sc Lions} \cite{Lions:63a} and then used
by {\sc Kohn \& Nirenberg} \cite{Kohn65} and {\sc Olein\u{i}k}
\cite{Oleinik} in order to discuss regularity issues. The reader is
referred to the book by {\sc Lions \& Magenes} \cite{Lions-Magenes1} for an account of
results in the linear setting. Nonlinear systems, yet 
under stronger growth assumptions on the viscosity, including an application 
to Navier-Stokes are investigated by {\sc Lions} in \cite{Lions:63,Lions:65}. 
By way of contrast to our set-up, for the elliptic regularizations in these 
contributions no variational structure is available. 

The novelty of our contribution is that of directly moving from a global-in-time
variational perspective. The $\eps$-dependent functionals $I^\eps$ correspond to
the {\it Weighted
  Inertia-Dissipation-Energy} (WIDE) functionals for viscous-fluid flows. They
are obtained as the weighted sum of inertia,
dissipation, and energy of the fluid. We present an account of this derivation
in Section
\ref{sec:WED} below. Let us however stress that, in addition to the
above-mentioned classical terms, we include in the analysis a $\sigma$
stabilization term. This is instrumental in proving a-priori estimates
and, as it is apparent from inspecting the Euler-Lagrange equations \eqref{EL},
has no influence on the limit system. 
We note that stabilization is a standard tool in numerical methods for compressible and incompressible flows (cf., e.g., \cite{Hughes} for a review). The specific regularization employed in this work is referred to as `streamline-upwind/Petrov-Galerkin' (SUPG) regularization in the numerical literature and was first introduced in \cite{Brooks}.

The study of
WIDE functionals started from the case of gradient flows (no inertia,
quadratic dissipation) and has to be traced back at least to 
{\sc Ilmanen} \cite{Ilmanen94}, who used a global-in-time variational
method to tackle 
existence and partial
regularity of the  Brakke mean-curvature flow of
varifolds. An application to existence of periodic
solutions for gradient flows is given by {\sc
  Hirano} in \cite{Hirano94}. The variational nature of elliptic
  regularization is at the core of \cite[Problem~3, p.~487]{Evans98}
  of the classical textbook by {\sc Evans}. Two examples of
relaxation related with micro-structure evolution have been provided
 in  \cite{Conti-Ortiz08} and the case of mean-curvature evolution
of Cartesian surfaces is in \cite{spst}. The analysis of the WIDE
approach for abstract gradient flows for $\lambda$-convex and nonconvex energies is
in \cite{ms3,akst5} in the Hilbertian case and in \cite{cras,metric_wed} in
the metric case.  {\sc Melchionna} \cite{Melchionna0} extended the
theory to classes of nonpotential perturbations and {\sc B\"ogelein, Duzaar, \&
Marcellini}~\cite{Boegelein-et-al14} recently used this variational approach to
prove the existence of variational
solutions to the equation 
$
u_t -\nabla \cdot f(x,u,\nabla u) + \partial_u f(x,u,
\nabla u)=0
$
where the field $f$ is convex in
$(u, \nabla u)$.

Doubly nonlinear parabolic evolution equations can be tackled by the
WIDE variational formalism as
well. The first result in this direction is in  
\cite{Mielke-Ortiz06}, where the case of rate-independent processes is
addressed. The corresponding time
discretization has been presented in the subsequent \cite{ms2} and  an
application to crack-front propagation in brittle materials  is in
\cite{Larsen-et-al09}. 
The rate-dependent case  has been analyzed in 
\cite{akmest,akst,akst2,akst4}. See also {\sc Liero \& Melchionna} \cite{Liero-Melchionna} for
a stability result via $\Gamma$-convergence \cite{DalMaso93} and
{\sc Melchionna} \cite{Melchionna} for an application to the study of
symmetries of solutions.

In the dynamic case, {\sc De Giorgi} conjectured in
\cite{DeGiorgi96} that the WIDE functional procedure could be
implemented in the setting of semilinear waves.  This 
  has been ascertained in \cite{dg} (for the finite-time case)
and in {\sc Serra \& Tilli} \cite{Serra-Tilli10} (for the
infinite-time case).   The possibility of following this same
  variational approach in other hyperbolic situations has also been
  pointed out in \cite{DeGiorgi96}. Indeed, extensions to mixed
hyperbolic-parabolic semilinear equations \cite{dg3}, to different
classes of nonlinear energies \cite{dg2,Serra-Tilli14}, and to
nonhomogeneous equations \cite{TTilli17} are also available.  
  Here, we
  further develop De Giorgi's approach by showing its
  applicability to fluids. When formulated in terms of the fluid
  motion $\varphi:\Omega \times [0,\infty) \to \Omega$, fluid dynamics
  is hyperbolic. The corresponding 
 WIDE functional \eqref{eq:WIDE} is derived in Section
  \ref{sec:WED}. Such functional is then recast in form of $I^\epsi$ by changing
  variables from the Lagrangian motion $\varphi$ to the Eulerian
velocity field $u = \dot \varphi \circ \varphi^{-1}$.

The literature on the Navier-Stokes system is huge and we shall not
attempt to review it here.   The reader is referred to
  \cite{Robinson,Seregin,temam84} for a collection of results.
On the other hand, global-in-time variational
approaches to  
\eqref{eq:NSE-with-p} are just a few. A complete variational
resolution has been provided by {\sc Ghoussoub}  within the theory of
self-dual Lagrangians \cite{Ghoussoub08}. In
particular, in \cite{Ghoussoub07,Ghoussoub09} the Navier-Stokes
system is reformulated in terms of a so-called null-minimization
principle, namely the attainment the value $0$ of
a specific nonnegative functional inspired by Fenchel's duality. Such
attainment is then ascertained in \cite{Ghoussoub07,Ghoussoub09},
giving rise to a complete existence theory for periodic-in-time solutions.

In \cite{Pedregal12} {\sc Pedregal} characterizes weak solutions as those $u$
such that  $E(v)=0$ for a given error functional $E$ computed on the
function $v=Tu$ where the mapping $T$ is defined by minimizing a second functional
parametrized by $u$. This approach provides an
existence theory in 2D only.

Solutions of the Navier-Stokes
system are related by {\sc Gomes}
\cite{Gomes05,Gomes07} to critical points of a stochastic control problem on the group
of area-preserving diffeomorphisms, see also \cite{Arnaudon12,Yasue83}
for some related discussion. Note however that the analysis in
\cite{Gomes05,Gomes07} assumes sufficient smoothness. In particular, it does
not directly originate an existence theory.

Moving from the transport-diffusion scheme by {\sc Pironneau} \cite{Pironneau},
{\sc Gigli \& Mosconi} \cite{Gigli-Mosconi12} present a variational time
discretization based on minimizing
movements and prove convergence as the fineness of the
time-partition goes to $0$.
A saddle-point formulation in space-time is investigated from the
numerical viewpoint by {\sc Schwab \& Stevenson}
\cite{Schwab17}.

\section{Weighted  Inertia-Dissipation-Energy  functionals for viscous fluids}\label{sec:WED}

 We devote this section to discuss the Weighted
Inertia-Dissipation-Energy (WIDE) variational approach to general rate problems as well as its application to the motion of
incompressible viscous fluids.
This brings to a justification of the particular form of the
functional $I^\eps$. 

Many physical systems are described by models in rate form: The
state of the system is described by a trajectory $\varphi:   [0,\infty) \to
H$ in the Hilbert space $H$ solving  the nonlinear differential inclusion
\begin{subequations}\label{eq:Intro:Classical}
\begin{align}
    & 0 \in
    \rho \ddot{\varphi}(t)
    + \partial_{\dot{\varphi}} \Psi(\varphi(t), \dot \varphi (t))
    +
    \partial_\varphi E(t,\varphi (t)),
    \label{eq:Intro:Classical1} \\
    & \varphi (0) = \varphi_0, \quad \dot{\varphi}(0) = \varphi^1.
    \label{eq:Intro:Classical2}
\end{align}
\end{subequations}
 Here, dots represent time derivation, $\rho > 0$, $\Psi : { H \times H}
\to  [0,\infty]$ is  the  dissipation potential which is convex in the
second argument,  and  $E : {  [0, \infty) \times } H \to { \R \cup
  \{\infty\}}$ is  the time-dependent  energy function. 
Relation \eqref{eq:Intro:Classical1} results from the balance between
the system of
inertial forces $\rho \ddot{\varphi}$, the system of dissipative forces
$\partial_{\dot{\varphi}} \Psi$, and that of conservative forces
$\partial_\varphi E$, where $\partial$ stands for the partial
(sub)differential. The 
trajectory $t \mapsto \varphi(t)$  of the system is the result of this balance and of
the initial conditions \eqref{eq:Intro:Classical2}. 

A time discretization of Problem \eqref{eq:Intro:Classical} is
given by  
\begin{align}\label{eq:Intro:InfSeq}
    \inf_{\varphi_{n+1} \in H} F_{n+1}(\varphi_{n+1} ; \varphi_n),
    \quad
    n = {1, 2, \dots, } 
\end{align}
for $\varphi_n = \varphi (t_n)$, where
\begin{align*}
\begin{split}
    F_{n+1}(\varphi_{n+1} ; \varphi_n)
    & =
   \tau \frac{\rho}{2}
      \left\| 
    \frac{\varphi_{n+1} - 2 \varphi_n + \varphi_{n-1}}{\tau^2}
     \right\|^2_H  
    \\ & \quad + 
    \Psi\left(\varphi_n, \frac{\varphi_{n+1} - \varphi_n}{\tau}\right)
    -
    \Psi\left(\varphi_{n-1}, \frac{\varphi_n - \varphi_{n-1}}{\tau}\right)
    \\ & \quad
    +
    \frac{
    E(t_{n+1}, \varphi_{n+1}) - 2 E(t_n, \varphi_n) + E(t_{n-1}, \varphi_{n-1})
    }{
   \tau
    }
\end{split}
\end{align*}
 and $\tau >0$ is the time step. 
We verify that, indeed, the Euler-Lagrange equations of this problem
are
\begin{align*}
    0 \in \rho \, \frac{\varphi_{n+1} - 2 \varphi_n + \varphi_{n-1}}{\tau^2}
    +
    {\partial_{\dot{\varphi}} } \Psi \left(\varphi_n, \frac{\varphi_{n+1} - \varphi_n}{\tau}\right)
    + {\partial_{\varphi}} E(t_{n+1}, \varphi_{n+1})
\end{align*}
which may be regarded as a central-difference/backward-Euler time
discretization of \eqref{eq:Intro:Classical1}. The causal nature of Problem \eqref{eq:Intro:Classical} is
reflected in the fact that the minimizations 
\eqref{eq:Intro:InfSeq} are solved \emph{sequentially}: problem $n=1$ is solved first with initial conditions $\varphi_0$ and
 $\varphi_1=\varphi_0 + \tau \,\varphi^1$  in order to
compute $\varphi_2$; subsequently, problem $n=2$ is solved to compute
$\varphi_3$, taking the solution $\varphi_2$ of the preceding problem
and $\varphi_1$ as initial conditions; and so on.   Alternatively, by
following   \cite{Mielke-Ortiz06} we proceed to formulate a single minimum problem for the entire trajectory $\varphi = {\{\varphi_2, \varphi_3, \dots\}}$ by \emph{stringing together} all the incremental problems (\ref{eq:Intro:InfSeq}) with Pareto weights $\e^{-t_n/\eps}$, where $\eps > 0$ is a small parameter. The resulting functional is
\begin{align}\label{eq:F1}
\begin{split}
    F^{\eps}( \{\varphi_2,\varphi_3,\dots\} ; \tau ) 
    & = 
    \sum_{n=1}^{ \infty} \e^{-t_{n+1}/\eps}
    \Bigg\{
    \frac{\rho}{2}
     \left\| 
    \frac{\varphi_{n+1} - 2 \varphi_n + \varphi_{n-1}}{\tau^2}
    \right\|^2_H 
    \\ & \quad +
    \frac{1}{\tau}
    \left[
    \Psi\left(\varphi_n, \frac{\varphi_{n+1} - \varphi_n}{\tau}\right)
    -
    \Psi\left(\varphi_{n-1}, \frac{\varphi_n - \varphi_{n-1}}{\tau}\right)
    \right]
    \\ & \quad +
    \frac{E(t_{n+1}, \varphi_{n+1}) - 2 E(t_n, \varphi_n) + E(t_{n-1}, \varphi_{n-1})}
    {\tau^2}
    \Bigg\} \, \tau. 
\end{split}
\end{align}
In the \emph{causal limit} of $\eps\to 0$, the exponential weights accord disproportionately larger importance to the first incremental problem relative to the second; to the second incremental problem relative to the third, and so on, as required by \emph{causality}. The functional \eqref{eq:F1} may formally be regarded as a time discretization of the continuous-time functional
\begin{align*}
    F^{\eps}(\varphi)
    =
    { \int_0^{\infty}} \e^{-t/\eps}
    \left\{
    \frac{\rho}{2} \| \ddot{\varphi}{ (t)}\|^2_H
    + { 
    \frac{d}{dt} \Psi \big(\varphi(t), \dot{\varphi}(t)\big)
    +
    \frac{d^2}{dt^2} E \big(t,  \varphi  (t)\big) }
    \right\} \, dt
\end{align*}
 and integration by parts gives  
\begin{align*} 
\begin{split}
    F^{\eps}(\varphi)
    & =
    { \int_0^{\infty}} \e^{-t/\eps}
    \left\{
    \frac{\rho}{2} \| \ddot{\varphi}(t) \|^2_H
    +
    \frac{1}{\eps}\Psi\big(\varphi(t), \dot{\varphi}(t)\big)
    +
    \frac{1}{\eps^2} E\big(t,\varphi(t)\big)
    \right\} \, dt
    \\ & \quad +
    \Big[\e^{-t/\eps} \Big(\Psi \big(\varphi(t),
\dot{\varphi}(t)\big) + \dot{E}\big(t,\varphi(t)\big)\Big)\Big]_0^{{ \infty}}
    +
    \frac{1}{\eps}
    \Big[\e^{-t/\eps} E\big(t,\varphi(t)\big) \Big]_0^{{ \infty}}. 
\end{split}
\end{align*}
 Assume  that the terms  $\e^{-t/\eps} (\Psi(\varphi(t),
\dot{\varphi}(t))+
\dot{E} (t,\varphi(t)))$ and $\e^{-t/\eps} E (t,\varphi(t))$  
vanish at $t = \infty$  and drop
them  at $t = 0$,  for these just depend on the fixed initial
conditions and do not affect the minimization.   This leads to the
 general WIDE  functional 
\begin{align} 
     F^{\eps} (\varphi)
    & =
    \int_0^{\infty} \e^{-t/\eps}
    \left\{
    \frac{\rho}{2} \| \ddot{\varphi}(t) \|^2_H
    +
    \frac{1}{\eps}\Psi\big(\varphi(t), \dot{\varphi}(t)\big)
    +
    \frac{1}{\eps^2} E\big(t,\varphi(t)\big)
    \right\} \, dt.\label{eq:dg}
\end{align} 
Assuming sufficient differentiability, the Euler-Lagrange equation of
$ F^\eps$ is 
\begin{align*}
    \frac{d^2}{dt^2}
    \left(\e^{-t/\eps} \rho \ddot{\varphi} \right)
    +
    \e^{-t/\eps} \frac{1}{\eps} \partial_{\varphi} \Psi(\varphi,\dot{\varphi})
    -
    \frac{d}{dt}\left( \e^{-t/\eps}
    \frac{1}{\eps}\partial_{\dot{\varphi}}\Psi(\varphi,\dot{\varphi}) \right)
    +
    \e^{-t/\eps} \frac{1}{\eps^2} \partial_{\varphi} E(t,\varphi)
    \ni 0
\end{align*}
which, upon simplification, reduces to
\begin{align*} 
\begin{split}
    0& \in \rho \ddot{\varphi}
    +
    \partial_{\dot{\varphi}} \Psi(\varphi,\dot{\varphi})
    +
    \partial_{\varphi} E(t,\varphi)
    \\ & \quad -
    \eps \big[
    2 \rho \dddot{\varphi}
    -
    \partial_{\varphi} \Psi(\varphi,\dot{\varphi})
    +
    \partial_{\varphi} \partial_{\dot{\varphi}} \Psi(\varphi, \dot{\varphi}) \dot{\varphi}
    +
    \partial^2_{\dot{\varphi}} \Psi(\varphi, \dot{\varphi}) \ddot{\varphi} \big]
    +
    \eps^2 \rho \ddddot{\varphi}
    .
\end{split}
\end{align*}
 The  original problem \eqref{eq:Intro:Classical1} is formally
recovered in the limit of $\eps \to 0$  as the terms in the second
line drop. Note that the term $\eps^2 \rho \ddddot{\varphi}$ qualifies
the latter as an elliptic regularization of relation
\eqref{eq:Intro:Classical1}. 

  As mentioned in the Introduction, the above described
  variational procedure has been conjectured by {\sc De~Giorgi}
  \cite{DeGiorgi96} to be amenable for general functionals of the
  calculus of variations of the form \eqref{eq:dg} (actually $\Psi=0$
  is assumed  in
  \cite{DeGiorgi96}). Our results confirm this possibility in the
  specific case of incompressible fluids. Let  $\varphi :
\Omega \times  [0,\infty) \to \Omega \subset \mathbb{R}^3$ be
the motion of a Newtonian viscous
fluid in a container $\Omega$. For simplicity, we suppose that the
fluid is free of body forces  (these can easily be added)  and
the deformation mapping $\varphi$ vanishes at the boundary
$\partial\Omega$ of the container  (no slip boundary
conditions). Let $\rho : \Omega \to (0,\infty)$ denote the density
of the fluid at time $t=0$ and assume   Newtonian viscosity, i.~e.,
 that  the viscous part $\sigma^v$ of the Cauchy stress tensor, and the rate of deformation tensor
\begin{align*}
    d = {\rm sym}(\dot{F}F^{-1})  = \frac12 \big((\dot{F}F^{-1})+(\dot{F}F^{-1})^\top \big)
\end{align*}
obey the relation
\begin{align} \label{eq:Newtonian}
    \sigma^v =
    \lambda \,\mathrm{tr}(d) I
    + 2 \mu \, {\rm dev}(d) 
\end{align}
where $F = \nabla \varphi$ is the deformation gradient, ${\rm dev}(d)
= d - \mathrm{tr}(d) I/3$ is the deviatoric part of $d$, $\lambda$
and $\mu$ are  the bulk and the shear  viscosity parameters,   and $I$ is the
identity $2$-tensor.  The viscosity law \eqref{eq:Newtonian} may be expressed explicitly as a function of $\dot{F}$ and $F$ in the form
\begin{align*} 
    \sigma^v(F, \dot{F}) =
    \lambda \mathrm{tr}(\dot{F} F^{-1}) I
    + \mu \,  {\rm dev}  (
    \dot{F} F^{-1} + F^{-\top} \dot{F}^\top ). 
\end{align*}
The corresponding viscous part of the first  Piola-Kirchhoff  stress tensor is
\begin{align*} 
    P^v(F, \dot{F}) = J {\sigma}^vF^{-\top},
\end{align*}
where $J = \det(F)$. The total stress is then,
\begin{align*}
    P(F, \dot{F}) = - \pi I + P^v(F, \dot{F})
\end{align*}
where $\pi$ is the hydrostatic pressure of the fluid.  A simple calculation reveals that the Newtonian viscosity law
possesses the potential structure
\begin{align*} 
    P^v =
    \partial_{\dot{F}} \psi(F, \dot{F}), 
\end{align*}
where the viscous potential per unit undeformed volume is
\begin{align*} 
    \psi(F, \dot{F}) = J \, \left\{
    \frac{\lambda}{2} \mathrm{tr}(d)^2
    + \mu \, | {\rm dev}(d) |^2  \  \right\}
\end{align*} 
For later reference we also introduce the viscous potential per unit deformed volume as
\begin{align*}
    \psi(d) =
    J^{-1} \psi(F, \dot{F}) =
    \frac{\lambda}{2} \mathrm{tr}(d)^2
    + \mu \, | {\rm dev}(d) |^2 , 
\end{align*}
which has the property that
\begin{align*}
    \sigma^v = \partial \psi(d).
\end{align*}
The viscous potential  reads hence 
\begin{align*}
    \Psi(\varphi, \dot{\varphi})
    =
    \int_\Omega
    \psi(\nabla\varphi, \nabla\dot{\varphi}) \, dx.
\end{align*}
 On the other hand,  the energy is  given by 
\begin{align*}
    E(\varphi)
    =
    \int_\Omega
    e( J) \, dx
\end{align*}
where 
$e(J)$ is the internal energy density as a function of specific volume. 

 Let us now restrict our attention to incompressible fluids. In
this case,  
 the internal energy reduces to
\begin{align*}
     e(J) 
    =
    \left\{
    \begin{array}{ll}
    0 &
    \text{if }  J  = 1 \text{ a.e. in } \Omega, \\
    \infty & \text{otherwise}
    \end{array}
    \right.
\end{align*}
and we get $\mathrm{tr}(d) = 0$ in \eqref{eq:Newtonian}, and let 
$\rho = \rho_0$ constant and   $\nu =\mu/\rho_0$ be the  kinematic 
viscosity coefficient. By using the Eulerian velocity field
$v = \dot{\varphi}\circ\varphi^{-1}$ we can transform the
representation of the motion from Lagrangian to Eulerian. The
incompressibility constraint $J=1$ along the flow corresponds via
Jacobi identity to $\div v =0 $ and we can rewrite  the viscous
dissipation potential as 
\begin{align*}
    \int_\Omega \psi(d) \, dx  =
    \mu  \int_\Omega | {\rm dev}({\rm sym}\nabla v) |^2\, dx
    = \mu  \int_\Omega | {\rm sym} \nabla v |^2\, dx. 
\end{align*}
Hence, the WIDE functional 
\begin{align}
  F^\eps(\varphi)&=\int_0^{\infty} \e^{-t/\eps}
     \int_\Omega\left\{
     \frac{\rho}{2} |\ddot{\varphi}|^2 
     +
     \frac{1}{\eps} \psi(\nabla\varphi, \nabla\dot{\varphi})   
     +
     \frac{1}{\eps^2} e(J)
     \right\} \, dx \, dt  \label{eq:WIDE}
\end{align} 
can be rewritten as 
\begin{align*}
& \frac{1}{\rho_0} F^\eps(\varphi) \equiv I^\eps_0(v):=
\left\{
  \begin{array}{ll}
&\displaystyle\int_0^{\infty} \e^{-t/\eps}
     \int_\Omega\left\{\frac{1}{2} | \partial_t v + v \cdot \nabla v |^2
    +
    \frac{\nu}{\eps} |{\rm sym} \nabla v|^2
    \right\}   \, dx \, dt \\[3mm]
&\hspace{24.8mm}
    \text{if} \ \   \div v =0  \ \ \text{a.e. in} \ \  \Omega\times (0,\infty), \\[3mm]
    &\infty  \qquad \qquad \qquad  \text{otherwise}.
  \end{array}
\right. 
\end{align*}

Assuming sufficient differentiability, the  corresponding 
Euler-Lagrange equations  read as 
\begin{align*}
\begin{split}
    & -
    \partial_t \left( \e^{-t/\eps}
    \rho_0 ( \partial_t v_i + v_j \, \partial_{x_j} v_i ) \right)
    +
    \e^{-t/\eps}
    \rho_0 ( \partial_t v_k + v_j \, \partial_{x_j} v_k) \partial_{x_i} v_k
    \\ & \quad\quad -
    \partial_{x_k} \left( \e^{-t/\eps}
    \rho_0 ( \partial_t v_i + v_j \, \partial_{x_j} v_i ) v_k \right)
    -
    \frac{1}{\eps}
    \e^{-t/\eps} (\mu \Delta v_i - \partial_{x_i} \pi ) = 0
\end{split}
\end{align*} 
(repeated indices to be summed) for $i = 1, \ldots, n$, which, upon simplification, reduces to
\begin{align*}
\begin{split}
    0=&
     ( \partial_t v_i + v_j \, \partial_{x_j} v_i ) - \nu 
     \Delta v_i + \partial_{x_i} p
    \\ & +
    \eps \left[
    - \partial_t \left(
     ( \partial_t v_i + v_j \, \partial_{x_j} v_i ) \right)
    +
     ( \partial_t v_k + v_j \, \partial_{x_j} v_k ) \, \partial_{x_i} v_k
    -
    \partial_{x_k} \left(
     ( \partial_t v_i + v_j \, \partial_{x_j} v_i ) v_k \right)
    \right] 
\end{split}
\end{align*}
where $p = \pi/\rho_0$ is the pressure per unit mass. 
 The   incompressible Navier-Stokes  system
\eqref{eq:NSE-with-p} can hence be formally recovered in the causal limit of $\eps\to 0$.

 The analysis of the functional $I^\eps_0$ is quite challenging, for
the nonlinear term $\partial_t v + v \cdot \nabla v $ couples the time
derivative $\partial_t v$ and the nonlinear convection term $v \cdot
\nabla v$ and a separate control of these two terms seems not 
available. We hence resort in {\it stabilizing} $I^\eps_0$
by augmenting it with the additional term 
\begin{equation}
\int_\Omega \frac{\sigma}{2}|v \cdot \nabla v|^2 \,
dx.\label{eq:stab}
\end{equation}
Up to the addition of such stabilization term, the functional $I^\eps$ from \eqref{I} and the WIDE 
functional $I^\epsi_0$ coincide (note that $2\|{\rm sym} \nabla v\|^2=\|\nabla v\|^2$ on
divergence-free fields vanishing at the boundary). The inclusion of the $\sigma$ stabilizing term does not affect the limit $\eps
\to 0$ of the Euler-Lagrange equations. Indeed, any additional term
 of order one  in $\eps$   which does not contain a time
derivative will not appear to leading order in the Euler-Lagrange
equations. In particular, such a term  will vanish  in the
causal limit $\eps \to 0$. Yet, as we will see,  such
stabilization allows
 for significantly improved a-priori estimates. 

 The specific
form \eqref{eq:stab} for a stabilization term is not the only possibility. As will become clear from our analysis
(cf.\  Proposition  \ref{lemma:a-priori-bds}), other 
choices  would lead to the same a-priori estimates and, eventually, to
the same convergence result.

As mentioned in the Introduction, the minimization of $I^\eps$
  delivers an elliptic-in-time approximation
  of the incompressible Navier-Stokes system. In particular, the minimizers $I^\eps$ are
  more regular in time with respect to  
  Leray-Hopf solutions. As it will be clear from the proof of our main
result, such additional time regularity is lost as $\eps \to 0$. This
singular-perturbation procedure has indeed been pioneered by
\cite{Lions:63,Lions:65}, who proved existence of Leray-Hopf solutions
by augmenting the incompressible Navier-Stokes system by
$-\eps \partial_t^2 v$ and taking the limit as $\epsi\to 0$. In
constrast to the WIDE approach,  Lions' method is nonvariational, for
the $\epsi$-approximating system is not an
Euler-Lagrange equation of a functional.

Before moving on, let us mention that weak solutions of the
incompressible Navier-Stokes system can be tackled by a variety of
alternative methods.
The classical approach to existence is via Faedo-Galerkin approximations
\cite{Tartar,temam84}. More specifically, finite elements \cite{Girault} as well as
spectral \cite{Canuto,Guo}, stabilized \cite{Brooks,Hughes},
isogeometric \cite{Evans}, and collocation \cite{Bassi} approximations have been deeply studied. 
Time discretizations \cite{Lubich,Mueller}, including {\it parareal} \cite{Fischer}
and transport-diffusion schemes
\cite{Gigli-Mosconi12,Pironneau} have been set forth, giving rise
to space-time schemes
\cite{Schwab17,temam84}. Finite differences \cite{Strikwerda} and
finite volumes \cite{Jones} schemes have
been considered as well. Another option are
regularization and iterative methods, possibly in combination with
suitable reformulations of the problem \cite{Berselli,Chorin,Lin,Rannacher}.

\section{Statement of the main result}
 
Let us start by setting up the functional frame.  
Let $\Omega \subset \R^3$ be an open bounded set  with Lipschitz
boundary $\partial \Omega$. 
In the following, we indicate with $\| \cdot \|_{E}$ the norm of the 
Banach space $E$ and by $\| \cdot \|$ the norm of a square
integrable function, regardless of the number of its components.
 Let  $\mathcal{V} := \{v\in
C^{\infty}_{\rm c}(\Omega;  \R^3 ) :  \div  v = 0\}$
 be given and  
introduce the Hilbert spaces 
\begin{align*}
  V_s &:= \mbox{the closure of } \mathcal{V} \mbox{ in } H^s(\Omega;  \R^3 ) \cap H^1_0(\Omega;  \R^3 ),\\
  H &:= \mbox{the closure of } \mathcal{V} \mbox{ in } L^2(\Omega;  \R^3 ), 
\end{align*}
for $s \ge 1$. If $s = 1$ we simply write $V = V_s$. Identifying $H$ with its dual $H'$ we have the dense and continuous inclusions
\begin{align*}
  V_s \hookrightarrow V \hookrightarrow H \equiv H' \hookrightarrow V' \hookrightarrow V_s'.
\end{align*}
Suppressing $s$, the dual pairing between $V_s'$ and $V_s$ is denoted
by $\langle\cdot,\cdot\rangle$: $V_s'\times V_s\to \R$. We denote by $A : V
\to V' \, (\hookrightarrow V_s'$, $s \ge 1$) the {\it Stokes} operator 
given by 
\begin{align}\label{eq:A-def}
  \langle A v, \psi \rangle 
  & =  \int_{\Omega} \nabla v : \nabla \psi  \, dx 
   \qquad\forall \, v, \psi \in V,  
\end{align}
which satisfies 
\begin{align}\label{eq:Au-bd} 
  \| A v \|_{V'} 
  &\le C \| \nabla v \|
   \qquad\forall \, v \in V.
\end{align}
Let us now define the quadratic functional $B:V \to V'$ as
\begin{align}\label{eq:B-def}
  \langle B(v), \psi \rangle 
   := \int_{\Omega} [v \cdot \nabla v] \cdot \psi \, dx = -
   \int_\Omega (v \otimes v) : \nabla \psi\, dx
  \qquad \forall \, v, \, \psi \in V. 
\end{align}
Note that $ [v \cdot \nabla v] \cdot \psi \in L^{6/5}(\Omega)$ for all
$v, \, \psi \in V$ and that we have 
\begin{align}\label{eq:B-bd}
  \| B(v) \|_{V'} 
  \le C \| |v|^2 \| \leq C \| v \|_V^2
  \qquad \forall \, v \in V  
\end{align}
and, for all $s > 3/2$ 
\begin{equation}
  \label{piu}
  \| B(v) \|_{V'_s} 
  \le C \| v \cdot \nabla v \|_{L^1(\Omega;\R^3)} \leq C \| v \|_V^2
  \qquad \forall \, v \in V_s
\end{equation}
where we have used that $V_s  \hookrightarrow
L^\infty(\Omega;\R^3)$. 

For a given initial condition $u_0 \in H$ we choose a sequence $u_0^{\eps} \in V$ 
(by using, e.g., that $\mathcal{V}$ is dense in $H$) with
\begin{align}\label{eq:u-0-bd} 
  u_0^{\eps} \to u_0 \mbox{ in } H 
  \qquad \mbox{ and } \qquad 
  \| \nabla u_0^{\eps} \|^2
    + \eps \| u_0^{\eps} \cdot \nabla u_0^{\eps} \|^2\le C_0 \eps^{-1} 
\end{align} for  some  constant $C_0 > 0$ and  define the
admissible set of trajectories as  
\begin{align*}
  U^{\eps} &:= \{u\in L^2_{\rm loc}(0,\infty;V) : \partial_t u + u
  \cdot \nabla u, \ \sigma  \,u
  \cdot \nabla u \in L^2_{\rm loc}(0,\infty; L^2(\Omega;\R^3)),  u(0) = u_0^{\eps}\}.
\end{align*}
Note that, for all $u \in U^\epsi$, from $u\in L^2_{\rm
  loc}(0,\infty;V)$ one has that $u \cdot \nabla u \in L^1_{\rm
  loc}(0,\infty;L^{3/2}(\Omega;\R^3))$. Hence $\partial_t u  \in L^1_{\rm
  loc}(0,\infty;L^{3/2}(\Omega;\R^3))$ as well,
 $t \mapsto u(t)$ is
continuous in $L^{3/2}(\Omega;\R^3)$, and the initial
condition $u(0) = u_0^{\eps}$ in the definition of
$U^{\eps}$ makes sense.

\begin{definition}[Leray-Hopf solutions] We say that $u \in
  L^2(0,\infty;V)$ is a \emph{Leray-Hopf solution} of the
  Navier-Stokes system \eqref{eq:NSE-with-p} if $\partial_t u \in
  L^{1}(0,\infty;V')$ and
  \begin{align}
    & \partial_t u +  B(u) +\nu Au =0 \quad \text{in} \ \
    V', \ \ \text{a.e. in} \ (0,T),\label{eq:NS}\\
    &u(0)=u_0  \quad \text{in} \ \  H.\label{eq:iniz}
  \end{align}   
\end{definition}
Existence of Leray-Hopf solutions is rather classical
\cite[Thm.~III.3.1]{temam84}. These are actually weakly continuous
from $[0,T]$ to $H$ \cite[Thm.~III.3.1]{temam84} so that the initial
condition \eqref{eq:iniz} makes sense. Moreover, the energy inequality \eqref{eq:ei} can be guaranteed to hold true. In three dimensions
they belong to $L^{r}_{\rm loc}(0,\infty;L^s(\Omega;\R^3))$ if $2/r + 3/s = 3/2$ and $2 \le s \le 6$ \cite[Lemma 3.5]{Robinson}, with $\partial_t u \in
 L^{4/3}_{\rm loc}(0,\infty;V') \cap L^2(0, \infty; V_{3/2}')$, $B(u) \in L^{4/3}_{\rm
   loc}(0,\infty;V')$ \cite[Thm.~III.3.3]{temam84}, and $u \in L^r_{\rm loc}(\delta, \infty; W^{2,s}(\Omega;\R^3)) \cap W^{1,r}_{\rm loc}(\delta, \infty; L^s(\Omega;\R^3))$ for any $\delta > 0$ if $2/r + 3/s = 4$ \cite[Thm.~V.6.11]{Seregin}. 
Note that the pressure $p$ plays
the role of the Lagrange multiplier corresponding to the
incompressibility constraint $u(t) \in V$ and as such does not show up
in \eqref{eq:NS}. Equivalently, one could reformulate the problem in
the pair $(u,p)$, see \cite[Rem.~I.1.4 and Eq.~(III.3.129)]{temam84}.  Leray-Hopf solutions
 are unique in two dimensions \cite[Thm.~III.3.2]{temam84}. In three
 dimensions, if $u \in
 L^{r}_{\rm loc}(0,\infty;L^s(\Omega;\R^3))$ with $2/r + 3/s = 1$ and $s > 3$ and $u_0 \in V$, then $u$ is unique among Leray-Hopf solutions satisfying the energy inequality \cite[Thm.~8.19]{Robinson}. 

For $\sigma\geq 0$ and  $ \nu>0$ 
we consider the  WIDE functionals $I^{\eps}:U^{\eps} \to
[0,\infty)$  
\begin{align*} 
  I^{\eps}(u) 
  &= \int_0^{\infty} \e^{-t/\eps}\left\{ \frac{1}{2}  \| \partial_t u + u \cdot \nabla u \|^2
     + \frac{\sigma}{2} \| u \cdot \nabla u \|^2
     + \frac{\nu}{2\eps} \| \nabla u \|^2
  \right\} \, dt,  
\end{align*}
taking values in $[0,\infty]$. (Note that any divergence-free Sobolev
functions on which $I^{\eps}$ is finite  necessarily belongs to $U^{\eps}$.) 

In Proposition \ref{prop:min-ex} below we will see that $I^{\eps}$
indeed admits a minimizer in $U^{\eps}$ for all $\sigma\geq
0$. For $\sigma>0$ we compute the corresponding Euler-Lagrange
equations in Lemma \ref{lemma:EL-eqn} below. By assuming further that
$\sigma> 1/8$, one can check the validity of the a-priori bounds of
Proposition  \ref{lemma:a-priori-bds}. These are instrumental for
passing to the limit for $\eps \to 0$. Our main result is the
convergence of minimizers up to subsequences to Leray-Hopf
solutions. 
 
\begin{theorem}[Variational approach to Navier-Stokes]\label{theo:main}
Let $\sigma > 1/8$ and $u^{\eps} \in U^{\eps}$ be
a minimizer of $I^{\eps}$. Then there exists a subsequence (not relabeled) such that 
\begin{align*} 
  u^{\eps} 
  \weakly u &\quad\mbox{in } L^2(0,\infty;V), \qquad 
  \partial_t u^{\eps} 
  \weakly \partial_t u \quad\mbox{in } L^{2}(0,\infty;V_s'),  
\end{align*} 
$s > 5/2$, for some $u \in L^2(0,\infty;V) \cap L^{\infty}(0,
\infty;H)$ with $u(0) = u_0$ where $u$ is a   Leray-Hopf
solution of the Navier-Stokes system \eqref{eq:NSE-with-p}. 
 Moreover, $u$ satisfies the energy inequality 
\begin{align}
   \| u(T) \|^2+ 2 \nu \int_0^T \| \nabla u(t) \|^2\, dt 
  \le   \| u_0 \|^2\label{eq:ei} 
  \quad\mbox{for a.e. } T > 0.  
\end{align}
\end{theorem}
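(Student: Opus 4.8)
The plan is to combine the Euler-Lagrange equations of Lemma \ref{lemma:EL-eqn} with the $\eps$-uniform a-priori bounds of Proposition \ref{lemma:a-priori-bds} (valid for $\sigma > 1/8$) to extract a weakly convergent subsequence, and then to pass to the limit in the (weak form of the) equations. First I would record from Proposition \ref{lemma:a-priori-bds} the uniform bounds that are surely available: a bound on $u^\eps$ in $L^2(0,\infty;V) \cap L^\infty(0,\infty;H)$, a bound on $\eps^{1/2}(\partial_t u^\eps + u^\eps\cdot\nabla u^\eps)$ and $\eps^{1/2}(u^\eps\cdot\nabla u^\eps)$ in $L^2(0,\infty;L^2)$ coming directly from $I^\eps(u^\eps) \le I^\eps(\text{competitor}) \le C\eps^{-1}\cdot\eps = C$ together with \eqref{eq:u-0-bd}, and — this is the crux of the $\sigma>1/8$ hypothesis — a bound on $\partial_t u^\eps$ in $L^2(0,\infty;V_s')$ for $s > 5/2$. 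The last bound is obtained by testing the Euler-Lagrange equation \eqref{EL} against test fields in $V_s$: the $O(\eps)$ terms are controlled because each carries an explicit $\eps$ together with quantities bounded by $\eps^{-1/2}$ (hence $O(\eps^{1/2})$), while $B(u^\eps)$ is bounded in $V_s'$ by \eqref{piu} and $Au^\eps$ in $V'$ by \eqref{eq:Au-bd}. From these uniform bounds, Banach–Alaoglu gives a subsequence with $u^\eps \weakly u$ in $L^2(0,\infty;V)$, $u^\eps \weaklystar u$ in $L^\infty(0,\infty;H)$, and $\partial_t u^\eps \weakly \partial_t u$ in $L^2(0,\infty;V_s')$.

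Next I would upgrade the weak convergence to a strong one sufficient to pass to the limit in the quadratic term $B(u^\eps)$. Since $V \hookrightarrow\hookrightarrow H \hookrightarrow V_s'$ with the first embedding compact, the Aubin–Lions–Simon lemma applied on each bounded interval $(0,T)$ yields $u^\eps \to u$ strongly in $L^2(0,T;H)$, hence (interpolating with the $L^2(0,\infty;V)\subset L^2(0,\infty;L^6)$ bound) strongly in $L^2(0,T;L^p)$ for some $p>3$ — enough to get $u^\eps\otimes u^\eps \to u\otimes u$ in $L^1_{\rm loc}$ and therefore $B(u^\eps) \weakly B(u)$ in, say, $L^{4/3}_{\rm loc}(0,\infty;V')$ and also in $L^2_{\rm loc}(0,\infty;V_s')$. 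Now fix $\psi \in V_s$ and $\zeta \in C_c^\infty(0,\infty)$, multiply \eqref{EL} by $\e^{t/\eps}$, pair with $\psi$, multiply by $\zeta(t)$ and integrate: the first line reproduces $\langle\partial_t u^\eps + B(u^\eps) + \nu A u^\eps,\psi\rangle$ and the remaining two lines are each $O(\eps^{1/2})$ in $L^2_t$ after integration by parts in time moves derivatives onto $\zeta$ or $\psi$ (using $|\!-\!\partial_t(\e^{-t/\eps}\,\cdot\,)|$ produces a factor $\eps^{-1}\e^{-t/\eps}$ which combines with the prefactor $\eps$ to give $O(1)$ times an $O(\eps^{1/2})$ quantity; one has to be a little careful but no term is worse than $O(\eps^{1/2})$). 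Passing $\eps\to0$ gives $\partial_t u + B(u) + \nu A u = 0$ in $V_s'$, tested against a dense set, hence a.e.\ in $(0,\infty)$; since each term is actually in $V'$ (by \eqref{eq:Au-bd}, \eqref{eq:B-bd}, and $\partial_t u = -B(u)-\nu Au \in L^{4/3}_{\rm loc}(0,\infty;V')$) the equation \eqref{eq:NS} holds in $V'$. The initial condition \eqref{eq:iniz} follows because $u^\eps(0) = u_0^\eps \to u_0$ in $H$ and $u^\eps \to u$ in, e.g., $C([0,T];V_s')$ by the time-derivative bound, while weak continuity into $H$ of the limit (standard for Leray–Hopf solutions, \cite[Thm.~III.3.1]{temam84}) pins down $u(0)=u_0$.

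Finally, the energy inequality \eqref{eq:ei}. The natural route is to pass to the limit in the corresponding inequality for the regularized problem. Testing the Euler–Lagrange equation by $u^\eps$ itself — which is formally legitimate since $u^\eps \in U^\eps$ has enough regularity — and integrating over $(0,T)$ produces, after handling the $\eps$-terms (each of which is $O(\eps^{1/2})$ by the a-priori bounds, possibly up to boundary contributions at $t=0$ controlled by \eqref{eq:u-0-bd}), an inequality of the form $\|u^\eps(T)\|^2 + 2\nu\int_0^T\|\nabla u^\eps\|^2 \le \|u_0^\eps\|^2 + o(1)$; alternatively one derives it directly from the lower bound $I^\eps(u^\eps)\ge 0$ combined with an integration by parts in $t$ against $\e^{-t/\eps}$ that recovers the dissipation identity in the limit. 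Using $\|u_0^\eps\|\to\|u_0\|$, $\liminf\|u^\eps(T)\| \ge \|u(T)\|$ for a.e.\ $T$ (from $u^\eps(T)\weakly u(T)$ in $H$, which holds for a.e.\ $T$ along a further subsequence), and weak lower semicontinuity of $\int_0^T\|\nabla\cdot\|^2$, one obtains \eqref{eq:ei}.

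\medskip
\noindent\emph{Main obstacle.} The delicate point is the passage to the limit in the $O(\eps)$ block of \eqref{EL} when testing against $V_s$-fields and against $u^\eps$: one must verify that \emph{every} term there — including $\div((\partial_t u^\eps + u^\eps\cdot\nabla u^\eps)\otimes u^\eps)$, $(\nabla u^\eps)^\top(\partial_t u^\eps + u^\eps\cdot\nabla u^\eps)$, and the time derivative $-\partial_t(\partial_t u^\eps+u^\eps\cdot\nabla u^\eps)$ — is controlled in the relevant dual norm by a product of the $O(\eps^{-1/2})$ quantities $\|\nabla u^\eps\|$, $\eps^{1/2}\|\partial_t u^\eps+u^\eps\cdot\nabla u^\eps\|$, $\eps^{1/2}\|u^\eps\cdot\nabla u^\eps\|$ times a power of $\eps$ that leaves a net $o(1)$. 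This is exactly where $\sigma>1/8$ enters, via Proposition \ref{lemma:a-priori-bds}, since without the stabilization term one cannot separately bound $\partial_t u^\eps$ from the convective term. The choice $s>5/2$ (rather than $s>3/2$) is what buys the embedding $V_s\hookrightarrow W^{1,\infty}$ needed to absorb the worst of these $\eps$-terms.
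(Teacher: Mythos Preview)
Your overall architecture (a-priori bounds, Aubin--Lions compactness, limit passage in the weak Euler--Lagrange equation, then the energy inequality) matches the paper, but there is a genuine gap in the step you flag as routine: the uniform bound $\|\partial_t u^\eps\|_{L^2(0,\infty;V_s')}\le C$. This is \emph{not} contained in Proposition~\ref{lemma:a-priori-bds}, and your suggested derivation --- ``testing \eqref{EL} against $V_s$-fields; each $O(\eps)$ term carries an explicit $\eps$ times a quantity bounded by $\eps^{-1/2}$'' --- does not go through. The strong Euler--Lagrange equation (Corollary~\ref{cor:EL-strong}) reads
\[
  v^\eps \;=\; \eps\,\partial_t v^\eps \;-\; f^\eps - g^\eps - \nu A u^\eps,
  \qquad v^\eps = \partial_t u^\eps + B(u^\eps).
\]
Two obstructions arise. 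First, you have \emph{no} bound whatsoever on $\partial_t v^\eps$ (this is essentially $\partial_t^2 u^\eps$), so the term $\eps\,\partial_t v^\eps$ is not ``$\eps$ times something $O(\eps^{-1/2})$''. Second, even the terms $f^\eps$, $g^\eps$ are only $O(\sqrt\eps)$ in $L^1(0,\infty;V_s')$ (see \eqref{eq:f-bd}--\eqref{eq:g-bd}), not in $L^2$, because they contain the product $\|\nabla u^\eps(t)\|\,\|v^\eps(t)\|$ with both factors merely in $L^2_t$. So rearranging the equation does not yield an $L^2_t$ bound on $v^\eps$ by comparison.

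The paper closes this gap in Proposition~\ref{prop:dualbounds} by \emph{solving} the linear ODE $v^\eps-\eps\partial_t v^\eps=-(f^\eps+g^\eps+\nu Au^\eps)$ on $(0,\infty)$ with the decaying boundary condition at $t=\infty$ coming from $v^\eps\in L^2(0,\infty;H)$. This gives the explicit representation $v^\eps=K*(f^\eps+g^\eps+\nu Au^\eps)$ with kernel $K(t)=\eps^{-1}\e^{t/\eps}\mathbf 1_{\{t\le 0\}}$. Young's inequality then does the work: $\|K\|_{L^1}=1$ handles $\nu Au^\eps\in L^2(V_s')$, while $\|K\|_{L^2}=(2\eps)^{-1/2}$ upgrades the $L^1$ bound $\|f^\eps+g^\eps\|_{L^1(V_s')}\le C\sqrt\eps$ to an $L^2$ bound on the convolution. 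This is the missing idea; once you have $\|v^\eps\|_{L^2(0,\infty;V_s')}\le C$, the remainder of your sketch (Aubin--Lions, limit in \eqref{eq:euler-v-distr-weak} via \eqref{eq:eps-v-conv}--\eqref{eq:v-conv}, initial condition, energy inequality from \eqref{eq:unif-bd}) is essentially the paper's argument.
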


The proof of the statement relies on a-priori estimates on the
Euler-Lagrange equations of $I^\epsi$. After deriving the Euler-Lagrange
equations  
in Section \ref{sec:EL}, we prove the a-priori bounds  in Section
\ref{sec:bounds}. Eventually, the proof of Theorem \ref{theo:main} is
in Section
\ref{sec:proof}. An interesting open problem raised by our analysis is, if convergence to solutions of the Navier-Stokes system can also be obtained in the unstabilized case $\sigma = 0$. 

Let us now comment on some possible variants of the statement. First
of all, the convergence holds more generally
for critical points $\ue$ of $I^\eps$, as long as $I^{\eps}(u^{\eps}) \le
C \eps^{-1}$. In case $u_0 \in V$ with $u_0\cdot \nabla u_0\in H$ we
may choose $u_0^{\eps} = u_0$ and no approximation as in
\eqref{eq:u-0-bd} is needed. For general $u_0$ and $\sigma >0$ 
this is not possible since $u \in 
U^{\eps}$ implies that $u \in L^2(0,1;V)$ and $\partial_t u \in
L^2(0,1;H)$ and hence $u(0) \in H^{1/2}(\Omega;\R^3)$. The
occurrence of external forces
can be considered as well.

 A caveat on notation: In the following, we use the same symbol $C$ for a positive
constant, possibly depending on $\sigma$, $\nu$, and $C_0$ but
independent of $\epsi$ and $T$. We warn the reader that the value of
$C$ may change from line to line. 

\section{Existence of  minimizers} \label{sec:min}

We start by checking that $I^\eps$ admits minimizers on $U^\eps$
for all $\sigma \geq 0$ and $\eps > 0$, which are assumed to be fixed
throughout this section. 
\begin{proposition}[Existence of minimizers]\label{prop:min-ex} 
  Let $\sigma \geq 0$. Then, there exists a minimizer $u^{\eps}$ of $I^{\eps}$ in $U^{\eps}$ and
$I(u^{\eps}) \le C \eps^{-1}$.
\end{proposition}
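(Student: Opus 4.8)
The plan is to apply the direct method of the calculus of variations. First I would check that $I^\eps$ is not identically $+\infty$ on $U^\eps$, which amounts to exhibiting a single competitor with finite energy: a natural choice is $u(x,t) = \e^{-t/\eps} u_0^\eps(x)$ (or a similar decay profile), for which $\partial_t u + u\cdot\nabla u$, $u\cdot\nabla u$, and $\nabla u$ all lie in the required spaces, and a direct computation using the bound \eqref{eq:u-0-bd} on $u_0^\eps$ gives $I^\eps(u) \le C\eps^{-1}$. This simultaneously establishes that the infimum $m := \inf_{U^\eps} I^\eps$ is finite and $\le C\eps^{-1}$, which is the asserted bound on the minimizer.

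Next I would take a minimizing sequence $(u_k) \subset U^\eps$ with $I^\eps(u_k) \to m$. Because the weight $\e^{-t/\eps}$ is bounded below by $\e^{-T/\eps}$ on each interval $(0,T)$, the finiteness of $\sup_k I^\eps(u_k)$ gives, for each $T$, uniform bounds on $\|\nabla u_k\|_{L^2(0,T;L^2)}$, on $\|\partial_t u_k + u_k\cdot\nabla u_k\|_{L^2(0,T;L^2)}$, and (if $\sigma>0$) on $\|u_k\cdot\nabla u_k\|_{L^2(0,T;L^2)}$. Using Poincar\'e on $V$ together with $\|B(v)\|_{V'}\le C\|v\|_V^2$ and interpolation, one controls $\partial_t u_k$ in, say, $L^{4/3}_{\rm loc}(0,\infty;V')$ or in $L^1_{\rm loc}(0,\infty;L^{3/2})$, so that after a diagonal extraction over $T=1,2,\dots$ we get $u_k \weakly u$ in $L^2_{\rm loc}(0,\infty;V)$, together with weak limits of the nonlinear quantities; an Aubin--Lions argument yields strong convergence $u_k \to u$ in $L^2_{\rm loc}(0,\infty;L^q)$ for suitable $q<6$, which is enough to identify the weak limit of $u_k\cdot\nabla u_k$ as $u\cdot\nabla u$ and hence the weak limit of $\partial_t u_k + u_k\cdot\nabla u_k$ as $\partial_t u + u\cdot\nabla u$.

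I would then pass to the limit in each term. The maps $v \mapsto \int_0^\infty \e^{-t/\eps}\tfrac12\|\partial_t v + v\cdot\nabla v\|^2\,dt$ and the analogous $\sigma$- and $\nu$-terms are convex in their (weakly convergent) arguments and strongly continuous, hence weakly lower semicontinuous, so $I^\eps(u) \le \liminf_k I^\eps(u_k) = m$. It remains to verify $u \in U^\eps$, in particular that the initial condition $u(0) = u_0^\eps$ is preserved: since $u_k \in U^\eps$ has $u_k(0) = u_0^\eps$ and $\partial_t u_k$ is bounded in $L^1_{\rm loc}(0,\infty;L^{3/2})$ uniformly (near $t=0$, using the explicit lower bound on the weight), the trace at $t=0$ passes to the limit in $C([0,1];L^{3/2})$ by the Arzel\`a--Ascoli / compactness argument already sketched in the excerpt for elements of $U^\eps$. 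Then $u$ is admissible and $I^\eps(u) = m$, so $u^\eps := u$ is the desired minimizer with $I^\eps(u^\eps) \le C\eps^{-1}$.

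The main obstacle is the compactness needed to identify the nonlinear limit: the functional only controls the \emph{combination} $\partial_t u_k + u_k\cdot\nabla u_k$ in $L^2_{\rm loc}(0,\infty;L^2)$, not $\partial_t u_k$ and $u_k\cdot\nabla u_k$ separately (this is precisely why the $\sigma$-stabilization is introduced later in the paper). When $\sigma>0$ the extra term $\|u_k\cdot\nabla u_k\|^2$ is controlled and one can split; when $\sigma=0$ one must instead bound $\partial_t u_k$ directly in a negative-order space by writing $\partial_t u_k = (\partial_t u_k + u_k\cdot\nabla u_k) - u_k\cdot\nabla u_k$ and estimating $u_k\cdot\nabla u_k$ in $L^1_{\rm loc}(0,\infty;L^{3/2}) \hookrightarrow L^1_{\rm loc}(0,\infty;V_s')$ via \eqref{piu}, which still suffices for an Aubin--Lions compactness argument giving strong $L^2_{\rm loc}$ convergence of $u_k$ and hence weak convergence of $u_k\cdot\nabla u_k$ to $u\cdot\nabla u$. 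Care is also needed because all estimates are only local in time and degenerate as $T\to\infty$ through the weight $\e^{-T/\eps}$; this is handled, as above, by a diagonal subsequence over an exhaustion of $(0,\infty)$, with no uniformity in $T$ required since $\eps$ is fixed here.
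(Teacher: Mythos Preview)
Your proposal is correct and follows essentially the same direct-method argument as the paper: exhibit a competitor to get $\inf I^\eps \le C\eps^{-1}$, extract weak limits from a minimizing sequence using the local-in-time bounds, apply Aubin--Lions to get strong $L^2_{\rm loc}$ convergence and identify the nonlinear limit, then use convexity (in $\partial_t u + u\cdot\nabla u$, $u\cdot\nabla u$, $\nabla u$) for lower semicontinuity and pass $T\to\infty$. The only cosmetic difference is that the paper takes the simpler constant-in-time competitor $u\equiv u_0^\eps$ (so that $\partial_t u = 0$) rather than your decaying profile, and it spells out the interpolation chain $L^1(0,T;L^{3/2})\to C([0,T];L^{3/2})\to L^4(0,T;L^{12/5})\to L^{4/3}(0,T;L^{12/11})$ for $\partial_t u_k$ and $u_k\cdot\nabla u_k$; your handling of the $\sigma=0$ case via $u_k\cdot\nabla u_k\in L^1_{\rm loc}(0,\infty;L^{3/2})$ is exactly what the paper does.
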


\begin{proof} 
The set  $U^\eps$ in not empty,
as the constant-in-time trajectory $u \equiv u_0^{\eps}$ belongs to
it, see assumption \eqref{eq:u-0-bd}. In this case 
one has that 
\begin{align*}
  I^{\eps}(u) 
  &= \int_0^{\infty} \e^{-t/\eps}\Big\{\frac{1+ \sigma}{2} \| u_0^{\eps} \cdot \nabla u_0^{\eps} \|^2
     + \frac{\nu}{2\eps} \| \nabla u_0^{\eps} \|^2
  \Big\} \, dt \\ 
  &\le \frac{(1+ \sigma) \eps}{2} \| u_0^{\eps} \cdot \nabla u_0^{\eps} \|^2
     + \frac{\nu}{2} \| \nabla u_0^{\eps} \|^2
   \le C \eps^{-1}.
\end{align*} 
In particular, $\inf I^\epsi \le C \eps^{-1}$. Assume now $u_k \in
 U^\eps$ to be a minimizing sequence for $I^\eps$. By letting $v_k = \partial_t u_k + u_k \cdot
 \nabla u_k $ we have that 
$  \e^{-t/2\eps}v_k $ are bounded in $L^2(0,
\infty; L^2(\Omega; \R^3 ))$ and $\e^{-t/2\eps} u_k $ are bounded in 
 $L^2(0,
\infty; V)$.  One can hence extract not relabeled subsequences such
that $\e^{-t/2\eps} v_k \weakly \e^{-t/2\eps} v$ in $L^2(0,
\infty; L^2(\Omega;\R^3))$ and $\e^{-t/2\eps} u_k \weakly \e^{-t/2\eps} u$ in $L^2(0,
\infty; V)$. 

Fix any $T>0$ and note that $u_k$ are bounded in
$L^2(0,T;L^6(\Omega;\R^3))$ as $V \subset L^6(\Omega;\R^3)$. This
entails that $u_k \cdot
\nabla u_k$ are 
bounded in $L^1(0,T;L^{3/2}(\Omega;\R^3))$. As $v_k$ are bounded in
$L^2(0,T;L^2(\Omega;\R^3))$, we conclude that $\partial_t u_k$ are
bounded in $L^1(0,T;L^{3/2}(\Omega;\R^3))$ as well. Hence, $u_k$ are
bounded in $C([0,T];L^{3/2}(\Omega;\R^3))$ and, by
interpolation, in $L^4(0,T;L^{12/5}(\Omega;\R^3))$. Eventually, both $u_k \cdot
\nabla u_k$ and $\partial_t u_k$ are 
bounded in $L^{4/3}(0,T;L^{12/11}(\Omega;\R^3))$. By possibly
extracting again we find a
  not relabeled subsequence such that 
$$ \partial_t u_k \weakly \partial_t u \ \ \text{and} \
\ u_k \cdot \nabla u_k \weakly u\cdot \nabla u \ \ \text{in} \ \
L^{4/3}(0,T;L^{12/11}(\Omega;\R^3))$$
where for the last convergence we also used that 
$u_k \to u$ in $L^2(0,T;L^2(\Omega;\R^3))$ strongly due to the Aubin-Lions lemma 
and thus $u_k \cdot \nabla u_k \weakly u\cdot \nabla u$ in 
$L^{1}(0,T;L^{1}(\Omega;\R^3))$. 
This proves that 
$$v_k \weakly v=\partial_t u + u
\cdot \nabla u \ \ \text{in} \ \  L^2(0,T;L^2(\Omega;\R^3)).$$ 
We now use convexity in order to check that 
\begin{align*}
 &\int_0^T \e^{-t/\eps} \Big\{ \frac12 \| \partial_t u + u
\cdot \nabla u \|^2 + \frac{\sigma}{2}\|u \cdot
  \nabla u\|^2 + \frac{\nu}{2\eps}\| \nabla u\|^2 \Big\} dt \nonumber\\
&\quad\leq
  \liminf_{k \to \infty} \int_0^T \e^{-t/\eps} \Big\{ \frac12 \| \partial_t u_k + u_k
\cdot \nabla u_k \|^2 + \frac{\sigma}{2}\|u_k \cdot
  \nabla u_k\|^2 + \frac{\nu}{2\eps}\| \nabla u_k\|^2 \Big\} dt \nonumber\\
&\quad\leq
  \liminf_{k \to \infty}  I^\eps(u_k) = \inf I^\eps.
\end{align*}
 It now suffices to take the limit as $T\to \infty$ on the
left-hand side above to conclude that $I^\eps(u) = \min I^\eps$.
\end{proof} 

For later use we remark that for $u\in U^{\eps}$ with 
$ I^\eps  (u) < \infty$ it holds 
\begin{align}\label{eq:u-eps-exp-bd} 
  \begin{split}
    &\e^{-t/2\eps} (\partial_t u + u \cdot\nabla u), \ \sigma\,
    \e^{-t/2\eps} u \cdot \nabla u
    \in L^2(0,\infty;L^2(\Omega; \R^3 )) \\
    &\quad \text{and} \ \ \e^{-t/2\eps} \nabla u\in
    L^2(0,\infty;L^2(\Omega; \R^{3\times 3} )).
  \end{split}
\end{align}

\section{Euler-Lagrange equations} \label{sec:EL}

Having established the existence of minimizers, we will now derive the
corresponding Euler-Lagrange equations. The stabilization parameter $\sigma>0$ is
kept fixed throughout the section.

\begin{lemma}[Euler-Lagrange equations]\label{lemma:EL-eqn}
Let $\sigma>0$ and $u^{\eps}$ be a minimizer of $I^{\eps}$. For any $\varphi \in L^2_{\rm loc}(0,\infty;V)$ with $\partial_t \varphi \in L^2_{\rm loc}(0,\infty;H)$, $\varphi(0) = 0$ and such that 
\begin{align*}
 &\e^{-t/2\eps} \partial_t \varphi, 
   \e^{-t/2\eps} \varphi \cdot \nabla \varphi,~ 
   \e^{-t/2\eps} \varphi \cdot \nabla u^{\eps},~ 
   \e^{-t/2\eps} u^{\eps} \cdot \nabla \varphi \in
   L^2(0,\infty;L^2(\Omega;  \R^3 )) \\
&~~\text{and}  \ \ ~ 
   \e^{-t/2\eps} \nabla \varphi \in L^2(0,\infty;L^2(\Omega;  \R^{3\times 3} ))
\end{align*}  we have   that 
\begin{align}\label{eq:EL-weak-eps}
\begin{split}
  &\int_0^{\infty} \e^{-t/\eps} \int_{\Omega} 
        \big( \partial_t u^{\eps} + u^{\eps} \cdot \nabla u^{\eps}  \big) 
       \cdot \big( \partial_t \varphi + u^{\eps} \cdot \nabla \varphi + \varphi \cdot \nabla u^{\eps} \big) \\ 
  &\qquad \qquad \qquad
     + \sigma (u^{\eps} \cdot \nabla u^{\eps}) \cdot \big( u^{\eps} \cdot \nabla \varphi + \varphi \cdot \nabla u^{\eps} \big) 
     + \frac{\nu}{\eps} \nabla u^{\eps} \cdot \nabla \varphi 
     \, dx \, dt = 0. 
\end{split}
\end{align}
In particular, any $\varphi \in C^{\infty}_{\rm c}(0,\infty; V_s)$, $s > 5/2$, satisfies 
\begin{align}\label{eq:EL-weak} 
\begin{split}
  &\int_0^{\infty} \int_{\Omega}  ( \partial_t u^{\eps} + u^{\eps} \cdot \nabla u^{\eps} ) \cdot 
    \big[ \varphi + \eps \partial_t \varphi 
    + \eps \varphi \cdot \nabla u^{\eps} + \eps u^{\eps} \cdot \nabla \varphi \big]  \\ 
 &\qquad + {}\sigma \eps (u^{\eps} \cdot \nabla u^{\eps}) \cdot \big[ \varphi \cdot \nabla u^{\eps} + u^{\eps} \cdot \nabla \varphi \big] 
  + \nu \nabla u^{\eps} : \nabla \varphi  \, dx \, dt 
  = 0. 
\end{split}
\end{align}
\end{lemma}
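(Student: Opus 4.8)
The plan is to derive the weak Euler-Lagrange identity \eqref{eq:EL-weak-eps} by the standard first-variation argument, taking care that all the terms produced by differentiating the quadratic-in-$u^\epsi$ integrand are genuinely integrable under the stated hypotheses on $\varphi$, and then to specialize to compactly supported smooth test functions to obtain \eqref{eq:EL-weak}. Concretely, I would fix an admissible variation $\varphi$ as in the statement and consider, for $\theta \in \R$, the perturbed trajectory $u^\epsi + \theta\varphi$. The first point to check is that $u^\epsi + \theta\varphi \in U^\epsi$ for every $\theta$: since $u^\epsi(0) = u_0^\epsi$ and $\varphi(0) = 0$ the initial condition is preserved; divergence-freeness is clear; and the requirement $\partial_t(u^\epsi+\theta\varphi) + (u^\epsi+\theta\varphi)\cdot\nabla(u^\epsi+\theta\varphi) \in L^2_{\rm loc}(0,\infty;L^2)$ together with the analogous requirement on $(u^\epsi+\theta\varphi)\cdot\nabla(u^\epsi+\theta\varphi)$ follows by expanding the bilinear convection term and invoking exactly the integrability hypotheses imposed on $\varphi$ (the cross terms $u^\epsi\cdot\nabla\varphi$, $\varphi\cdot\nabla u^\epsi$, $\varphi\cdot\nabla\varphi$ are assumed in $L^2$ with the weight $\e^{-t/2\epsi}$, which on any finite interval is comparable to the unweighted $L^2$ norm).

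Next I would write out $g(\theta) := I^\epsi(u^\epsi + \theta\varphi)$. Because the integrand of $I^\epsi$ is a quadratic polynomial in its arguments, $g$ is a quadratic polynomial in $\theta$, and the weighted $L^2$ bounds in \eqref{eq:u-eps-exp-bd} combined with the hypotheses on $\varphi$ guarantee that each coefficient is a finite integral; in particular $g(\theta) < \infty$ for all $\theta$. Since $u^\epsi$ is a minimizer, $g$ attains its minimum at $\theta = 0$, hence $g'(0) = 0$. Computing $g'(0)$ term by term: differentiating $\tfrac12\|\partial_t u + u\cdot\nabla u\|^2$ yields the pairing of $\partial_t u^\epsi + u^\epsi\cdot\nabla u^\epsi$ with $\partial_t\varphi + u^\epsi\cdot\nabla\varphi + \varphi\cdot\nabla u^\epsi$ (the linearization of the bilinear map $v\mapsto v\cdot\nabla v$ at $u^\epsi$ in direction $\varphi$); differentiating $\tfrac{\sigma}{2}\|u\cdot\nabla u\|^2$ yields $\sigma$ times the pairing of $u^\epsi\cdot\nabla u^\epsi$ with $u^\epsi\cdot\nabla\varphi + \varphi\cdot\nabla u^\epsi$; and differentiating $\tfrac{\nu}{2\epsi}\|\nabla u\|^2$ yields $\tfrac{\nu}{\epsi}\nabla u^\epsi : \nabla\varphi$. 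Summing and multiplying through by the weight $\e^{-t/\epsi}$ gives precisely \eqref{eq:EL-weak-eps}. The interchange of $\tfrac{d}{d\theta}$ with the space-time integral is justified by dominated convergence, the dominating functions being products of the fixed weighted-$L^2$ factors just listed (alternatively, one simply reads off the coefficient of $\theta$ in the explicit quadratic polynomial, avoiding any limiting argument).

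For the second assertion, given $\varphi \in C^\infty_{\rm c}(0,\infty;V_s)$ with $s>5/2$ I would verify that $\varphi$ meets all the hypotheses of the first part: it is smooth, compactly supported in $(0,\infty)$ hence $\varphi(0)=0$ and the exponential weight is bounded above and below on its support, while $V_s\hookrightarrow L^\infty(\Omega;\R^3)$ for $s>3/2$ (as recorded after \eqref{piu}) makes $\varphi\cdot\nabla\varphi$, $\varphi\cdot\nabla u^\epsi$, $u^\epsi\cdot\nabla\varphi$ all square integrable in space-time — the last two because $\nabla\varphi, \varphi \in L^\infty$ and $\nabla u^\epsi, u^\epsi \in L^2_{\rm loc}$. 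Applying \eqref{eq:EL-weak-eps} to this $\varphi$, multiplying the whole identity by $\epsi$ (which only rescales the $\nu$-term from $\tfrac{\nu}{\epsi}$ to $\nu$ and the others by a factor $\epsi$), and recognizing that the weight $\e^{-t/\epsi}$ can be absorbed — here I would note the identity $\e^{-t/\epsi}\partial_t\varphi = \partial_t(\e^{-t/\epsi}\varphi) + \tfrac{1}{\epsi}\e^{-t/\epsi}\varphi$, so that after this manipulation the plain test function $\varphi$ in \eqref{eq:EL-weak} arises from the $\tfrac1\epsi$ piece and $\epsi\partial_t\varphi$ from the derivative piece — yields \eqref{eq:EL-weak}. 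The only genuinely delicate point is bookkeeping: one must keep careful track of which terms carry a factor $\epsi$ after the rescaling and how the exponential weight is distributed across the $\partial_t\varphi$ term, but no hard analysis is involved beyond the integrability checks already described.
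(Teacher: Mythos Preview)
Your derivation of \eqref{eq:EL-weak-eps} is correct and is exactly the paper's argument: perturb by $\theta\varphi$, check admissibility and finiteness, differentiate at $\theta=0$. One small slip: $g(\theta)$ is a \emph{quartic} polynomial in $\theta$, not quadratic, because $u\cdot\nabla u$ is itself quadratic in $u$ (the paper records this by writing the perturbation as $s\zeta_1+s^2\zeta_2$, etc.). This does not affect the argument.

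The second part, however, is not quite right as written. If you apply \eqref{eq:EL-weak-eps} to the given $\varphi$ and use the identity $\e^{-t/\eps}\partial_t\varphi=\partial_t(\e^{-t/\eps}\varphi)+\tfrac{1}{\eps}\e^{-t/\eps}\varphi$, what comes out after multiplying by $\eps$ is \eqref{eq:EL-weak} with the test function $\e^{-t/\eps}\varphi$, \emph{not} $\varphi$: the ``$\tfrac{1}{\eps}$ piece'' still carries the weight, and so do all the spatial terms $u^\eps\cdot\nabla\varphi$, $\varphi\cdot\nabla u^\eps$, $\nabla\varphi$. So ``the plain test function $\varphi$ arises'' is false. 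You can repair this by observing that $\varphi\mapsto\e^{-t/\eps}\varphi$ is a bijection on $C^\infty_{\rm c}(0,\infty;V_s)$, so you have in fact obtained \eqref{eq:EL-weak} for every test function; but that step is missing. The paper avoids the issue by doing the substitution the other way: it sets $\psi=\e^{t/\eps}\varphi$ and applies \eqref{eq:EL-weak-eps} to $\psi$; then $\e^{-t/\eps}\partial_t\psi=\partial_t\varphi+\tfrac{1}{\eps}\varphi$ and the weight cancels everywhere, giving \eqref{eq:EL-weak} for $\varphi$ directly. A minor regularity point as well: to place $u^\eps\cdot\nabla\varphi$ in $L^2$ you need $\nabla\varphi\in L^\infty$, i.e.\ $V_s\hookrightarrow W^{1,\infty}(\Omega;\R^3)$, which is why $s>5/2$ (not merely $s>3/2$) is assumed; the paper invokes exactly this embedding.
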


\begin{proof}
The assumptions guarantee that $u^{\eps} + s \varphi \in U^{\eps}$ and 
\begin{align*}
 I^{\eps}(u^{\eps} + s \varphi) &= \int_0^{\infty} \e^{-t/\eps}\bigg\{
   \frac{1}{2} \| \partial_t (u^{\eps} + s \varphi) 
       + (u^{\eps} + s \varphi) \cdot \nabla (u^{\eps} + s \varphi) \|^2\\ 
 &  \qquad+ \frac{\sigma}{2} \| (u^{\eps} + s \varphi) 
         \cdot \nabla (u^{\eps} + s \varphi) \|^2
    + \frac{\nu}{2\eps} \| \nabla (u^{\eps} + s \varphi) \|^2 
 \bigg\} \, dt 
\end{align*}
is finite for any $s \in \R$  and is minimized at  $s =
0$.  Indeed, by letting  $\zeta_1 
= \partial_t \varphi + u^{\eps} \cdot \nabla \varphi + \varphi \cdot \nabla u^{\eps}$, 
$\zeta_2 = \varphi \cdot \nabla \varphi$, and $\xi_1 = u^{\eps} \cdot \nabla \varphi 
+ \varphi \cdot \nabla u^{\eps}$, $\xi_2 = \varphi \cdot \nabla
\varphi$  one can write 
\begin{align*}
 I^{\eps}(u^{\eps} + s \varphi)  &=\int_0^{\infty} \e^{-t/\eps}\bigg\{\frac{1}{2} \| \partial_t u^{\eps} 
      + u^{\eps} \cdot \nabla u^{\eps} 
      + s \zeta_1 + s^2 \zeta_2 \|^2\\ 
 &\qquad   + \frac{\sigma}{2} \| u^{\eps} \cdot \nabla u^{\eps} 
      + s \xi_1 + s^2 \xi_2 \|^2
    + \frac{\nu}{2\eps} \| \nabla u^{\eps} + s \nabla \varphi \|^2
 \bigg\} \, dt 
\end{align*}
By assumption we have $\e^{-t/2\eps} \zeta_1, \e^{-t/2\eps} \zeta_2, \e^{-t/2\eps} \xi_1, 
\e^{-t/2\eps} \xi_2 \in L^2(0,\infty;L^2(\Omega; \R^3))$. In combination with \eqref{eq:u-eps-exp-bd} this
shows that $I^{\eps}(u^{\eps} + s \varphi)$ is differentiable with  
\begin{align*}
 0 
 &= \frac{d}{ds}\Big|_{s = 0} I^{\eps}(u^{\eps} + s \varphi) \\
 &= \int_0^{\infty} \e^{-t/\eps} \int_{\Omega} 
       \big( \partial_t u^{\eps} + u^{\eps} \cdot \nabla u^{\eps} \big) \cdot \zeta_1 
      + \sigma (u^{\eps} \cdot \nabla u^{\eps}) \cdot \xi_1  
      + \frac{\nu}{\eps} \nabla u^{\eps} : \nabla \varphi 
      \, dx \, dt 
\end{align*}
as claimed. 

Consider now $\varphi \in C^{\infty}_{\rm c}(0,\infty,V_s)$, $s > 5/2$, 
and let $\psi = \e^{t/\eps} \varphi$ so that $\partial_t \psi =
\e^{t/\eps} \partial_t \varphi +  \eps^{-1} 
{\e^{t/\eps}}\varphi\in L^2(0,\infty;L^2(\Omega;  \R^3 ))$. 
Since $V_s \hookrightarrow
W^{1,\infty}(\Omega; \R^3 )$ and $\varphi$ has compact support
in $t$ we infer from  \eqref{eq:u-eps-exp-bd} that $\psi$ satisfies
the assumptions of the first part of the Lemma. Equation  \eqref{eq:EL-weak} is now a direct consequence of \eqref{eq:EL-weak-eps} applied to $\psi$. 
\end{proof}

We may  equivalently write equality  \eqref{eq:EL-weak} as 
\begin{align}\label{eq:EL-nach-phi}
\begin{split}
  &\int_0^\infty \int_{\Omega} 
    \eps  \big[ \partial_t u^{\eps} + u^{\eps} \cdot \nabla u^{\eps} \big] \cdot \partial_t \varphi 
    +  \big[ \partial_t u^{\eps} + u^{\eps} \cdot \nabla u^{\eps} \big] \cdot \varphi \\ 
    &\qquad\qquad   + \eps (\nabla u^{\eps})^\top \big[  \partial_t u^{\eps} + (1+ \sigma) u^{\eps} \cdot \nabla u^{\eps} \big] \cdot \varphi \\ 
  &\qquad\qquad  + \big[ \eps \big(  \partial_t u^{\eps} + (1 + \sigma) u^{\eps} \cdot \nabla u^{\eps} \big) \otimes u^{\eps} + \nu \nabla u^{\eps} \big] : \nabla \varphi  \, dx \, dt 
   = 0
\end{split}
\end{align}
 which corresponds indeed to the weak formulation of \eqref{EL}. By
using the  quadratic function $B$ introduced in \eqref{eq:B-def} we furthermore set 
\begin{align}\label{eq:v-def}
 v^{\eps} 
   =   \partial_t u^{\eps} +  B(u^{\eps}) =    \partial_t u^{\eps}+  
   u^{\eps}  \cdot \nabla u^{\eps} 
\end{align}
 where the latter equality follows from the fact that $u^{\eps}  \cdot
\nabla u^{\eps}  \in L^2(\Omega; \R^3 )$. (More precisely, $B(u^{\eps}) \in H \subset V'$ is the $L^2$-orthogonal projection of $u^{\eps}  \cdot \nabla u^{\eps}$ onto $H$.) Since $u^{\eps}, \partial_t u^{\eps},
u^{\eps} \cdot \nabla u^{\eps} \in L^2_{\rm
  loc}(0,\infty; L^2(\Omega; \R^3 ))$,  $\nabla
u^{\eps}\in L^2_{\rm
  loc}(0,\infty; L^2(\Omega; \R^{3\times 3} ))$,  and
$V_s \hookrightarrow W^{1,\infty}(\Omega; \R^3 )$ 
for  $s > 5/2$, we have that $f^{\eps}$ and $g^{\eps}$, defined by 
\begin{align}
  \langle f^{\eps}(t), \psi \rangle 
  &= \int_{\Omega} \eps (\nabla u^{\eps})^\top \big[  \partial_t u^{\eps}
  + (1  + \sigma) u^{\eps} \cdot \nabla u^{\eps} \big] \cdot \psi \, dx, \label{eq:f-def} \\ 
   \langle g^{\eps}(t), \psi \rangle 
  &= \int_{\Omega} \eps \big[  \partial_t u^{\eps} + (1  + \sigma) u^{\eps} \cdot \nabla u^{\eps} \big] \otimes u^{\eps} : \nabla \psi \, dx \label{eq:g-def}
\end{align}
for a.e.\ $t$ for all $\psi \in V_s$ are elements of $L^1_{\rm loc}(0, \infty; V_s')$ with 
\begin{align}
  \| f^{\eps} \|_{L^1(0, T; V_s')} 
  &\le C \eps \| \nabla u^{\eps} \|_{L^2(0,T;L^2(\Omega;\R^{3\times 3}))}
  \|  \partial_t u^{\eps} + (1   + \sigma) u^{\eps} \cdot \nabla u^{\eps} \|_{L^2(0,T;L^2(\Omega;\R^{n}))}, \label{eq:f-bd} \\ 
  \| g^{\eps} \|_{L^1(0, T; V_s')} 
  &\le C \eps \|  \partial_t u^{\eps} + (1    + \sigma) u^{\eps} \cdot \nabla u^{\eps} \|_{L^2(0,T;L^2(\Omega;\R^{n}))}\| u^{\eps} \|_{L^2(0,T;L^2(\Omega;\R^{n}))}\label{eq:g-bd} 
\end{align}
for any $T > 0$. 
By recalling the definition \eqref{eq:A-def} of the Stokes operator $A
: V \to V' \hookrightarrow V_s'$, we  obtain from Lemma
\ref{lemma:EL-eqn}  the following. 

\begin{corollary}[Euler-Lagrange equations, strong form]\label{cor:EL-strong}
Let $u^{\eps}$ be a minimizer of $I^{\eps}$  on $U^\eps$  and
$v^{\eps}, f^{\eps}, g^{\eps}$  be  defined in \eqref{eq:v-def}, \eqref{eq:f-def}, \eqref{eq:g-def}, respectively. Then 
\begin{align}\label{eq:euler-v-distr-weak}
  \int_0^{\infty} \big( \eps v^{\eps}, \partial_t \varphi \big)_H 
     + \langle v^{\eps} + f^{\eps} + g^{\eps}  +  \nu A u^{\eps}, \varphi \rangle \, dt
  = 0 
\end{align}
for all $\varphi\in C^{\infty}_{\rm c}(0,\infty;V_s)$, $s > 5/2$. In fact, $u^{\eps}$ is a solution to the elliptic problem
\begin{align}\label{eq:euler-v-distr}
  -\eps \partial_{t} v^{\eps}
  + v^{\eps}
  + f^{\eps} + g^{\eps}  +   \nu A u^{\eps}
  = 0
\end{align}
in $L^1_{\rm loc}(0, \infty; V_s')$. 
\end{corollary}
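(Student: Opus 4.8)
The plan is to read off the distributional identity \eqref{eq:euler-v-distr-weak} directly from Lemma~\ref{lemma:EL-eqn} and then to integrate the time derivative back, exploiting that the remaining terms are regular enough. First I would bring \eqref{eq:EL-weak} into the form \eqref{eq:EL-nach-phi}: this is pointwise algebra on the integrand, requiring no integration by parts, using the identities $d \cdot (a \cdot \nabla c) = a \cdot ((\nabla c)^\top d) = (d \otimes a) : \nabla c$ to recognise the $f^\eps$- and $g^\eps$-type contributions and absorbing the $\sigma$-terms so that the inertial coefficient becomes $\partial_t u^\eps + (1+\sigma) u^\eps \cdot \nabla u^\eps$. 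Recalling $v^\eps = \partial_t u^\eps + B(u^\eps)$ from \eqref{eq:v-def} (which acts on divergence-free fields by $\langle v^\eps(t), \psi\rangle = \int_\Omega (\partial_t u^\eps + u^\eps \cdot \nabla u^\eps)\cdot\psi\, dx$), the definitions \eqref{eq:f-def}, \eqref{eq:g-def} of $f^\eps, g^\eps$, and \eqref{eq:A-def} of the Stokes operator $A$, the terms in \eqref{eq:EL-nach-phi} become, in order, $\eps(v^\eps, \partial_t\varphi)_H$, $\langle v^\eps, \varphi\rangle$, $\langle f^\eps, \varphi\rangle$, and $\langle g^\eps + \nu A u^\eps, \varphi\rangle$, which is exactly \eqref{eq:euler-v-distr-weak}.

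To obtain \eqref{eq:euler-v-distr} I would first observe that $h^\eps := v^\eps + f^\eps + g^\eps + \nu A u^\eps$ lies in $L^1_{\rm loc}(0,\infty;V_s')$: indeed $v^\eps = \partial_t u^\eps + u^\eps \cdot \nabla u^\eps \in L^2_{\rm loc}(0,\infty;L^2(\Omega;\R^3)) \hookrightarrow L^1_{\rm loc}(0,\infty;V_s')$ because $u^\eps \in U^\eps$ with $I^\eps(u^\eps) < \infty$; $A u^\eps \in L^2_{\rm loc}(0,\infty;V')$ by \eqref{eq:Au-bd}; and $f^\eps, g^\eps \in L^1_{\rm loc}(0,\infty;V_s')$ by \eqref{eq:f-bd}--\eqref{eq:g-bd}, which is the place where $s > 5/2$, hence $V_s \hookrightarrow W^{1,\infty}(\Omega;\R^3)$, enters. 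Testing \eqref{eq:euler-v-distr-weak} with $\varphi(t) = \eta(t)\psi$ for $\eta \in C^{\infty}_{\rm c}(0,\infty)$ and fixed $\psi \in V_s$ then shows that the scalar function $t \mapsto \eps\langle v^\eps(t),\psi\rangle$ has distributional derivative $t \mapsto \langle h^\eps(t),\psi\rangle \in L^1_{\rm loc}(0,\infty)$; since $h^\eps$ is a fixed element of $L^1_{\rm loc}(0,\infty;V_s')$, the standard characterization of vector-valued Sobolev functions yields $\eps v^\eps \in W^{1,1}_{\rm loc}(0,\infty;V_s')$ with $\eps\partial_t v^\eps = h^\eps$, i.e.\ \eqref{eq:euler-v-distr} holds in $L^1_{\rm loc}(0,\infty;V_s')$.

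I do not expect a genuine obstacle here: the argument is essentially bookkeeping, together with the elementary fact that an $L^1_{\rm loc}$ vector-valued function whose scalarizations all carry $L^1_{\rm loc}$ distributional derivatives compatible with one fixed $L^1_{\rm loc}$ vector-valued candidate is itself $W^{1,1}_{\rm loc}$. The only steps demanding a little care are the pointwise tensor identities turning \eqref{eq:EL-weak} into \eqref{eq:EL-nach-phi}, and verifying that $f^\eps$ and $g^\eps$ are indeed $V_s'$-valued and locally integrable in time, which rests on the finiteness of $I^\eps(u^\eps)$ and on the Sobolev embedding $V_s \hookrightarrow W^{1,\infty}$ valid for $s > 5/2$.
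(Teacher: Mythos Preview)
Your proposal is correct and follows essentially the same route as the paper: \eqref{eq:euler-v-distr-weak} is read off from the rewritten Euler--Lagrange identity \eqref{eq:EL-nach-phi} by matching terms with the definitions of $v^\eps$, $f^\eps$, $g^\eps$, $A$, and the strong form \eqref{eq:euler-v-distr} then follows because all the non-derivative terms lie in $L^1_{\rm loc}(0,\infty;V_s')$, forcing $\partial_t v^\eps$ into that space as well. Your account is simply more explicit about the tensor algebra and the passage from scalar-valued distributional derivatives to the vector-valued $W^{1,1}_{\rm loc}$ statement, but no new idea is involved.
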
 

\begin{proof}
First note that  relation  \eqref{eq:euler-v-distr-weak} is an
immediate consequence of  the former 
\eqref{eq:EL-nach-phi}. Hence,  equation 
\eqref{eq:euler-v-distr} holds in the sense of distributions
$\mathcal{D}'(0,\infty;V_s')$ on $(0,\infty)$ with values in
$V_s'$. In particular, as $v^{\eps}, f^{\eps}, g^{\eps}, A u^{\eps}
\in L^1_{\rm loc}(0, \infty; V_s')$ we  have that $\partial_{t}
v^{\eps} \in L^1_{\rm loc}(0, \infty; V_s')$ as well.  
\end{proof}

\section{A-priori estimates}\label{sec:bounds}

In order to pass to the limit $\eps\to 0$  in the Euler-Lagrange equations and prove Theorem
\ref{theo:main} we now provide estimates on the minimizers
$u^{\eps}$. This requires   $\sigma
>1/8$, which is assumed throughout this section. 
 At first, moving from Lemma
\ref{lemma:EL-eqn} we prove in Proposition \ref{lemma:a-priori-bds} an
energy estimate which formally consists in testing the Euler-Lagrange
equations by $\ue$. Then, Proposition \ref{prop:dualbounds} exploits
the strong form of the Euler-Lagrange equations from Corollary
\ref{cor:EL-strong} in order to deduce bounds in dual spaces. Their derivation 
with the help of the convolution kernel $K$ (see \eqref{eq:K-kernel}) is inspired by a 
similar reasoning in \cite{Lions:65}.

\begin{proposition}[Energy estimate]\label{lemma:a-priori-bds}
 Let $\sigma > 1/8$. All minimizers   $u^{\eps}\in U^\eps$
 of $I^{\eps}$  satisfy 
$ \partial_t u^{\eps} ,
\,  
   u^{\eps} \cdot \nabla u^{\eps}  \in L^2(0,\infty;L^2(\Omega; \R^3))  $,
$ u^{\eps} \in L^2(0,\infty;H^1(\Omega;  \R^3 )) \cap
L^{\infty}(0,\infty;L^2(\Omega;  \R^3 )) $,  and 
$$ \| u^{\eps} \|_{L^2(0,\infty;H^1(\Omega;  \R^3 ))} 
   + \| u^{\eps} \|_{L^{\infty}(0,\infty;L^2(\Omega;  \R^3 ))} 
   \le C $$
as well as 
$$ \sqrt{\eps} \| \partial_t u^{\eps} \|_{L^2(0,\infty;L^2(\Omega;  \R^3 ))} 
   + \sqrt{\eps} \| u^{\eps} \cdot \nabla u^{\eps} \|_{L^2(0,\infty;L^2(\Omega;  \R^3 ))}
   \le C. $$ 
More explicitly, the following inequality holds 
\begin{align}\label{eq:unif-bd}
   \|  u^{\eps}(T) \|^2  + 2 \nu \int_0^T (1 - \e^{-t/\eps})
  \| \nabla u^{\eps}(t) \|^2 \, dt
  \le  \|  u_0^{\eps} \|^2
\end{align} 
for all $T > 0$. 
\end{proposition}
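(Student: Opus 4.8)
The plan is to run an energy estimate by testing the Euler--Lagrange equations of Lemma \ref{lemma:EL-eqn} with the trajectory $u^\eps$ itself, modulated in time by a cutoff $\eta_T$ tailored so that the inertial term telescopes. Fix $T>0$ and set
\[
\eta_T(t) = \e^{(t-T)/\eps} - \e^{-T/\eps} \quad (0\le t\le T), \qquad \eta_T(t) = 1 - \e^{-T/\eps} \quad (t\ge T).
\]
Then $\eta_T$ is Lipschitz, $\eta_T(0)=0$, $0\le\eta_T\le 1$, $|\eta_T'|\le \eps^{-1}$, and -- the decisive feature -- $\e^{-t/\eps}\eta_T'(t)\equiv\eps^{-1}\e^{-T/\eps}$ on $(0,T)$. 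By \eqref{eq:u-eps-exp-bd} the variation $\varphi=\eta_T u^\eps$ satisfies all the requirements in Lemma \ref{lemma:EL-eqn}; moreover, since $\partial_t u^\eps=(\partial_t u^\eps+u^\eps\cdot\nabla u^\eps)-u^\eps\cdot\nabla u^\eps\in L^2_{\rm loc}(0,\infty;H)$, the map $t\mapsto\|u^\eps(t)\|^2$ is absolutely continuous on $[0,T]$ with $\tfrac{d}{dt}\|u^\eps\|^2=2\int_\Omega u^\eps\cdot\partial_t u^\eps\,dx$.

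Inserting $\varphi=\eta_T u^\eps$ in \eqref{eq:EL-weak-eps}, integrating over $\Omega$ and expanding the quadratic terms -- using the antisymmetry $\int_\Omega(u^\eps\cdot\nabla u^\eps)\cdot u^\eps\,dx=0$, so that the $\eta_T'$-contribution produces $\tfrac{\eta_T'}{2}\tfrac{d}{dt}\|u^\eps\|^2$ -- one is led to the identity
\[
\int_0^\infty\e^{-t/\eps}\Big\{\tfrac{\eta_T'}{2}\tfrac{d}{dt}\|u^\eps\|^2 + \eta_T\,Q_\eps + \tfrac{\nu}{\eps}\,\eta_T\,\|\nabla u^\eps\|^2\Big\}\,dt = 0,
\]
with $Q_\eps(t)=\|\partial_t u^\eps\|^2 + 3\int_\Omega\partial_t u^\eps\cdot(u^\eps\cdot\nabla u^\eps)\,dx + 2(1+\sigma)\|u^\eps\cdot\nabla u^\eps\|^2$, the coefficient $3=2+1$ coming from $|\partial_t u^\eps+u^\eps\cdot\nabla u^\eps|^2$ and the cross term $(\partial_t u^\eps+u^\eps\cdot\nabla u^\eps)\cdot(u^\eps\cdot\nabla u^\eps)$; all integrals converge by \eqref{eq:u-eps-exp-bd} and finiteness of $I^\eps(u^\eps)$. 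This is exactly where $\sigma>1/8$ is used: by Cauchy--Schwarz $Q_\eps\ge\|\partial_t u^\eps\|^2-3\|\partial_t u^\eps\|\,\|u^\eps\cdot\nabla u^\eps\|+2(1+\sigma)\|u^\eps\cdot\nabla u^\eps\|^2$, a quadratic form whose discriminant $9-8(1+\sigma)$ is negative precisely when $\sigma>1/8$, so $Q_\eps\ge c(\|\partial_t u^\eps\|^2+\|u^\eps\cdot\nabla u^\eps\|^2)\ge0$ for a.e.\ $t$.

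It remains to read off \eqref{eq:unif-bd}. By the defining property of $\eta_T$, the inertial term equals $\tfrac{\e^{-T/\eps}}{2\eps}\int_0^T\tfrac{d}{dt}\|u^\eps\|^2\,dt=\tfrac{\e^{-T/\eps}}{2\eps}\big(\|u^\eps(T)\|^2-\|u_0^\eps\|^2\big)$, while $\e^{-t/\eps}\eta_T(t)$ equals $\e^{-T/\eps}(1-\e^{-t/\eps})$ on $(0,T)$ and $(1-\e^{-T/\eps})\e^{-t/\eps}$ on $(T,\infty)$. Substituting, multiplying by $2\eps\e^{T/\eps}$, and discarding the two nonnegative contributions -- the $Q_\eps$-integral over $(0,T)$ and the whole tail integral over $(T,\infty)$ -- one obtains precisely \eqref{eq:unif-bd}. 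Then $\|u^\eps(T)\|^2\le\|u_0^\eps\|^2$ gives the $L^\infty L^2$ bound; keeping the $Q_\eps$- and gradient-terms instead gives $\eps\int_0^\infty(1-\e^{-t/\eps})(\|\partial_t u^\eps\|^2+\|u^\eps\cdot\nabla u^\eps\|^2)\,dt\le C$ and $\int_0^\infty(1-\e^{-t/\eps})\|\nabla u^\eps\|^2\,dt\le C$; and since $1-\e^{-t/\eps}\ge1-\e^{-1}$ on $(\eps,\infty)$ whereas on $(0,\eps)$ the bound $I^\eps(u^\eps)\le C\eps^{-1}$ from Proposition \ref{prop:min-ex} together with $\e^{-t/\eps}\ge\e^{-1}$ controls $\eps\int_0^\eps(\|\partial_t u^\eps+u^\eps\cdot\nabla u^\eps\|^2+\|u^\eps\cdot\nabla u^\eps\|^2+\eps^{-1}\|\nabla u^\eps\|^2)\,dt$, one upgrades these to the asserted $L^2(0,\infty;\cdot)$ bounds (using Poincaré for the full $H^1$ norm and $\partial_t u^\eps=(\partial_t u^\eps+u^\eps\cdot\nabla u^\eps)-u^\eps\cdot\nabla u^\eps$). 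The main obstacle is the second step: checking that $\eta_T u^\eps$ is an admissible test function in Lemma \ref{lemma:EL-eqn} and carrying out the algebraic reduction correctly -- in particular pinning down the coefficient $3$ in $Q_\eps$, on which the threshold $\sigma>1/8$ hinges; everything afterwards is bookkeeping.
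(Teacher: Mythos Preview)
Your proof is correct and essentially identical to the paper's own argument: both test the Euler--Lagrange equation \eqref{eq:EL-weak-eps} with $\varphi=\eta\, u^\eps$ for a time cutoff $\eta$ satisfying $\eta(0)=0$ and $\e^{-t/\eps}\eta'\equiv\text{const}$ on $(0,T)$, arrive at the same quadratic form $Q_\eps$ with coefficient $3$ on the cross term (whence the threshold $\sigma>1/8$), and combine this with the short-time bound obtained from $I^\eps(u^\eps)\le C\eps^{-1}$. Your $\eta_T$ is just $\e^{-T/\eps}$ times the paper's $\eta(t)=\e^{t/\eps}-1$, a harmless rescaling.
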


\begin{proof}
Let us start by estimating  $u^{\eps}$ on  the   initial
short-time interval  $(0,\eps)$. 
By letting   $c = {\min\{1,\sigma,\nu\}}/({2 \e})>0$ we have 
\begin{align*}
  &\frac{c}{\eps} \int_0^{\eps} 
     \eps \| \partial_t u^{\eps} + u^{\eps} \cdot \nabla u^{\eps}  \|^2
   + \eps \| u^{\eps} \cdot \nabla u^{\eps} \|^2+ \| \nabla u^{\eps} \|^2\, dt \\ 
  &~~\le \int_0^{\infty} \e^{-t/\eps} \Big\{
      \frac{1}{2} \| \partial_t u^{\eps} + u^{\eps} \cdot \nabla u^{\eps} \|^2
      + \frac{\sigma}{2} \| u^{\eps} \cdot \nabla u^{\eps} \|^2
      + \frac{\nu}{2\eps} \| \nabla u^{\eps} \|^2\Big\} \, dt \\ 
  &~~ =  I(u^{\eps}) 
   \le C \eps^{-1}  
\end{align*}
by Proposition \ref{prop:min-ex}, and thus 
\begin{align}\label{eq:small-time-bd}
  \int_0^{\eps} \eps \| \partial_t u^{\eps} \|^2
   + \eps \| u^{\eps} \cdot \nabla u^{\eps} \|^2
   + \| \nabla u^{\eps} \|^2\, dt 
  \le C. 
\end{align}

 In order to deduce  an estimate on $[\eps, T]$,  for   $T > 0$, we  let $\eta \in W^{1,\infty}(0, \infty)$ be
defined  as  
$$ \eta(t) 
   = \begin{cases} 
        \e^{t/\eps} - 1 &\mbox{for } t \le T, \\ 
        (\e^{T/\eps} - 1) &\mbox{for } t \ge T. 
     \end{cases} $$ 
 and apply Lemma \ref{lemma:EL-eqn} to $\varphi = \eta \ue$. We  obtain  
\begin{align*}
  0 
  &= \int_0^{\infty} \e^{-t/\eps} \int_{\Omega} \eta(t) \Big\{ ( \partial_t u^{\eps} + u^{\eps} \cdot \nabla u^{\eps} ) \cdot 
    \big[ \partial_t u^{\eps} + 2 u^{\eps} \cdot \nabla u^{\eps} \big] \\ 
  &\qquad\qquad\qquad 
    + 2 \sigma |u^{\eps} \cdot \nabla u^{\eps}|^2 
    + \frac{\nu}{\eps} |\nabla u^{\eps}|^2 \Big\} 
 + \eta'(t)  ( \partial_t u^{\eps} + u^{\eps} \cdot \nabla u^{\eps} ) \cdot u^{\eps} \, dx \, dt \\ 
  &= \int_0^{\infty} \e^{-t/\eps} \eta(t) \int_{\Omega} 
      |\partial_t u^{\eps}|^2 
     + 3  \partial_t u^{\eps} \cdot (u^{\eps} \cdot \nabla u^{\eps} ) 
     + 2 (1 + \sigma) |u^{\eps} \cdot \nabla u^{\eps}|^2 
   + \frac{\nu}{\eps} |\nabla u^{\eps}|^2  \, dx \, dt \\ 
  &\qquad + \int_0^T \frac{1}{\eps} \int_{\Omega}  \partial_t u^{\eps} \cdot u^{\eps} \, dx \, dt, 
\end{align*}
where we have  also  used the fact  that $\int_{\Omega} (u^{\eps} \cdot \nabla u^{\eps}) \cdot u^{\eps} \, dx  = 0$. It follows that 
\begin{align*}
\begin{split}
  0 
  &= \int_0^{\infty} \e^{-t/\eps} \eta(t) \int_{\Omega} 
     \eps  |\partial_t u^{\eps} + \tfrac{3}{2} u^{\eps} \cdot \nabla u^{\eps}|^2 
+ \eps (2 \sigma - \tfrac{1}{4} ) |u^{\eps} \cdot \nabla u^{\eps}|^2 + \nu |\nabla u^{\eps}|^2 \, dx \ dt  \\ 
  &\qquad + \frac{1}{2} \int_{\Omega} |u^{\eps}(T)|^2dx - 
  \frac{1}{2} \int_{\Omega}   |u^{\eps}(0)|^2 \, dx.   
\end{split}
\end{align*}

Since $\eta(t) \ge 0$ for all $t$ and $t \mapsto \e^{-t/\eps} \eta(t)
= 1 - \e^{-t/\eps}$ increases on $[0, T]$,  by defining  $c =
\min\{1, 2\sigma - 1/4, \nu\}(1 - \e^{-1}) >0 $ we get 
\begin{align}\label{eq:large-time-bd}
\begin{split}
  &c \int_{\eps}^T \int_{\Omega} 
    \eps |\partial_t u^{\eps} + \tfrac{3}{2} u^{\eps} \cdot \nabla u^{\eps}|^2 
    + \eps |u^{\eps} \cdot \nabla u^{\eps}|^2
    + |\nabla u^{\eps}|^2 \, dx \ dt \\ 
  & \le \int_0^T \e^{-t/\eps} \eta(t) \int_{\Omega} 
     \eps  |\partial_t u^{\eps} + \tfrac{3}{2} u^{\eps} \cdot
     \nabla u^{\eps}|^2 
+ \eps (2 \sigma - \tfrac{1}{4} ) |u^{\eps} \cdot \nabla u^{\eps}|^2 + \nu |\nabla u^{\eps}|^2 \, dx \ dt  \\ 
  &\le \frac{1}{2} \| u_0^{\eps} \|^2 -
  \frac{1}{2} \| \ue(T) \|^2. 
\end{split}
\end{align}
Combining \eqref{eq:small-time-bd}, \eqref{eq:large-time-bd}, and \eqref{eq:u-0-bd} we see that 
$$  \| \ue(T) \|^2+  \int_0^T 
    \eps \| \partial_t u^{\eps} \|^2 
    + \eps \| u^{\eps} \cdot \nabla u^{\eps} \|^2 
    + \| \nabla u^{\eps} \|^2 \, dt \le C. $$  
As $T > 0$  is  arbitrary, this proves  the
statement. 
\end{proof}

\begin{proposition}[Dual estimate]\label{prop:dualbounds}
Let $\sigma >1/8$ and $s > 5/2$. All minimizers $\ue \in U^\epsi$
of $I^\eps$ satisfy 
\begin{align*}
  \|\partial_t u^{\eps}\|_{L^2(0,\infty;V_s')} 
  + \|B(u^{\eps})\|_{L^2(0,\infty;V_s')}  \le C. 
\end{align*}
\end{proposition}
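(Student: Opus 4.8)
The plan is to estimate $\partial_t u^\eps = v^\eps - B(u^\eps)$ by bounding $v^\eps$ and $B(u^\eps)$ separately in $L^2(0,\infty;V_s')$. The bound on $B(u^\eps)$ is immediate from the energy estimate: by \eqref{piu} one has $\|B(u^\eps)\|_{V_s'} \le C\|u^\eps\|_V^2$, and since $u^\eps \in L^2(0,\infty;V)\cap L^\infty(0,\infty;H)$ with uniform bounds from Proposition~\ref{lemma:a-priori-bds}, interpolation gives $u^\eps \in L^4(0,\infty;L^3(\Omega;\R^3))$ (indeed $\|u^\eps\|_{L^3}\le \|u^\eps\|^{1/2}\|u^\eps\|_{L^6}^{1/2}$), hence $\|u^\eps\|_V^2 \in L^2(0,\infty)$ after refining the Sobolev exponent so that $B(u^\eps)\in L^2(0,\infty;V_s')$ — one uses $\|v\cdot\nabla v\|_{L^1}\le\|v\|_{L^2}\|\nabla v\|_{L^2}$ together with $V_s\hookrightarrow L^\infty$. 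The remaining and harder task is the uniform $L^2(0,\infty;V_s')$ bound on $v^\eps$.

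For $v^\eps$ I would exploit the strong form of the Euler--Lagrange equation \eqref{eq:euler-v-distr}, namely $-\eps\partial_t v^\eps + v^\eps = -(f^\eps + g^\eps + \nu A u^\eps)$. The key idea, as flagged in the text, is to invert the elliptic-in-time operator $-\eps\partial_t + \mathrm{Id}$ using the convolution kernel $K$ (presumably $K(t) = \frac{1}{\eps}\e^{-t/\eps}\mathbf{1}_{\{t<0\}}$ or similar, introduced in \eqref{eq:K-kernel}), so that $v^\eps = K * (f^\eps + g^\eps + \nu A u^\eps)$ in an appropriate sense, taking into account the behavior at $t=0$ and as $t\to\infty$. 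Since $\|K\|_{L^1(\R)} = 1$ (the normalization making it an averaging kernel), Young's convolution inequality yields $\|v^\eps\|_{L^2(0,\infty;V_s')} \le \|f^\eps + g^\eps + \nu A u^\eps\|_{L^2(0,\infty;V_s')}$, up to boundary contributions. Thus it suffices to control $f^\eps$, $g^\eps$, and $A u^\eps$ in $L^2(0,\infty;V_s')$.

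The bound on $A u^\eps$ is immediate: $\|A u^\eps\|_{V'}\le C\|\nabla u^\eps\|$ by \eqref{eq:Au-bd} and $u^\eps\in L^2(0,\infty;V)$ uniformly. For $f^\eps$ and $g^\eps$ I would use the pointwise-in-$t$ estimates \eqref{eq:f-bd}--\eqref{eq:g-bd}, which give (dropping the $L^1$-in-time framing and working pointwise) $\|f^\eps(t)\|_{V_s'} \le C\eps\|\nabla u^\eps(t)\|\,\|\partial_t u^\eps(t) + (1+\sigma)u^\eps\cdot\nabla u^\eps(t)\|$ and similarly $\|g^\eps(t)\|_{V_s'}\le C\eps\|\partial_t u^\eps(t)+(1+\sigma)u^\eps\cdot\nabla u^\eps(t)\|\,\|u^\eps(t)\|$. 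Here the crucial point is that $\eps$ appears explicitly: squaring and integrating in $t$, the factor $\eps$ combines with $\|\sqrt\eps\,\partial_t u^\eps\|_{L^2L^2} + \|\sqrt\eps\,u^\eps\cdot\nabla u^\eps\|_{L^2L^2}\le C$ from Proposition~\ref{lemma:a-priori-bds}, while the other factor ($\|\nabla u^\eps\|$ for $f^\eps$, $\|u^\eps\|$ for $g^\eps$) is bounded in $L^2(0,\infty)$ or $L^\infty(0,\infty)$ respectively, again by Proposition~\ref{lemma:a-priori-bds}. Concretely $\|f^\eps\|_{L^2(0,\infty;V_s')}^2 \le C\eps^2 \|\nabla u^\eps\|_{L^\infty(0,\infty;\cdot)}^2 \cdot \eps^{-1}\|\sqrt\eps(\dots)\|_{L^2L^2}^2$ — one must be a bit careful which norm of $\nabla u^\eps$ is used, likely splitting via $L^4$-in-time interpolation so that both factors land in $L^4(0,\infty)$ and Hölder closes; in any case the net power of $\eps$ is nonnegative, so $f^\eps, g^\eps$ are bounded in $L^2(0,\infty;V_s')$ uniformly in $\eps$.

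The main obstacle I anticipate is the rigorous justification of the convolution representation $v^\eps = K*(\cdots)$ at the level of regularity available — $v^\eps$ and the right-hand side only lie in $L^1_{\mathrm{loc}}(0,\infty;V_s')$ a priori, and $\partial_t v^\eps\in L^1_{\mathrm{loc}}(0,\infty;V_s')$ — so one must argue that the solution of the scalar ODE $-\eps y' + y = F$ on $(0,\infty)$ produced by integrating against $K$ genuinely coincides with $v^\eps$, handling the initial value $v^\eps(0)$ and ruling out the homogeneous solution $\e^{t/\eps}$ (which is excluded precisely because $v^\eps$ should not grow exponentially — this follows from $\e^{-t/2\eps}v^\eps\in L^2(0,\infty;L^2)$ established in \eqref{eq:u-eps-exp-bd}). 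Once the representation is in place, the $L^1$-normalization of $K$ and Young's inequality make the rest routine, modulo bookkeeping of the $\eps$-powers in the $f^\eps,g^\eps$ estimates.
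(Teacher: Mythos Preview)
Your overall strategy---split $\partial_t u^\eps = v^\eps - B(u^\eps)$, bound $B(u^\eps)$ directly via $\|u^\eps\cdot\nabla u^\eps\|_{L^1}\le\|u^\eps\|\,\|\nabla u^\eps\|$ and $V_s\hookrightarrow L^\infty$, and represent $v^\eps$ as a convolution $K*(f^\eps+g^\eps+\nu Au^\eps)$---is exactly the paper's. The gap is in how you estimate the $f^\eps$ contribution.

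You propose to bound $f^\eps$ directly in $L^2(0,\infty;V_s')$ and then apply Young with $\|K\|_{L^1}=1$. But the pointwise bound $\|f^\eps(t)\|_{V_s'}\le C\eps\,\|\nabla u^\eps(t)\|\,\|\partial_t u^\eps(t)+(1+\sigma)u^\eps\cdot\nabla u^\eps(t)\|$ is a product of two factors, each of which is only known to lie in $L^2(0,\infty)$ (the second with an extra $\eps^{-1/2}$). There is no $L^\infty_t$ bound on $\nabla u^\eps$, and no $L^4_t$ interpolation is available from the a-priori estimates of Proposition~\ref{lemma:a-priori-bds}, so the product only lands in $L^1(0,\infty)$ by Cauchy--Schwarz, not in $L^2(0,\infty)$. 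Your sketch ``$\|f^\eps\|_{L^2}^2\le C\eps^2\|\nabla u^\eps\|_{L^\infty_t}^2\cdot\eps^{-1}(\dots)$'' invokes a norm that is not controlled.

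The paper's fix is to use the \emph{$L^1$} bound $\|f^\eps+g^\eps\|_{L^1(0,\infty;V_s')}\le C\sqrt{\eps}$ (which follows from Cauchy--Schwarz in time and Proposition~\ref{lemma:a-priori-bds}) and then apply Young's inequality with the \emph{$L^2$} norm of the kernel, $\|K\|_{L^2(\R)}=1/\sqrt{2\eps}$, so that
\[
  \|K*(f^\eps+g^\eps)\|_{L^2(\R;V_s')}
  \le \|K\|_{L^2(\R)}\,\|f^\eps+g^\eps\|_{L^1(\R;V_s')}
  \le C\eps^{-1/2}\cdot C\sqrt{\eps}=C.
\]
The $Au^\eps$ part is handled, as you say, with $\|K\|_{L^1}=1$. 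This split use of Young---$L^2*L^1$ for $f^\eps+g^\eps$, $L^1*L^2$ for $Au^\eps$---is the missing idea.
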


\begin{proof}
Let  $0 \le \tau \le T$. By  Proposition 
\ref{lemma:a-priori-bds} and equations \eqref{eq:f-bd},
\eqref{eq:g-bd},  and  \eqref{eq:Au-bd} we have the estimates 
\begin{align}\label{eq:h-bd}
  \| f^{\eps} + g^{\eps} \|_{L^1(0, \infty;V_s')} 
  \le C \sqrt{\eps}  
  \qquad \mbox{and } \qquad 
  \| A u^{\eps} \|_{L^2(0, \infty;V_s')} 
  \le C. 
\end{align}
Multiplying  the Euler-Lagrange equation 
\eqref{eq:euler-v-distr} by $ \epsi^{-1} \e^{-t/\eps}$, integrating over $[\tau, T]$, and multiplying by $\e^{\tau/\eps}$, we find that $v^{\eps}$ as defined in \eqref{eq:v-def} satisfies 
\begin{align*} 
  v(\tau)
  = \e^{(\tau-T)/\eps} v(T) - \int_{\tau}^T\frac{\e^{(\tau-t)/\eps}}{\eps} \big( f^{\eps}(t) + g^{\eps}(t)  + \nu Au^{\eps}(t) \big) \, dt.   
\end{align*}
As $v^{\eps} \in L^2(0,\infty;H)$ by  Proposition  \ref{lemma:a-priori-bds} we may send $T \to \infty$ along a sequence with $v^{\eps}(T) \to 0$ in $H$ ($\hookrightarrow V_s'$) and see that 
\begin{align*} 
  v^{\eps}(\tau)
  = - \int_{\tau}^{\infty}\frac{\e^{(\tau-t)/\eps}}{\eps} \big(
  f^{\eps}(t) + g^{\eps}(t)  + \nu Au^{\eps}(t) \big) \, dt.
\end{align*}

Extending $f^{\eps}, g^{\eps}$ and $A u^{\eps}$ by $0$ on
$(-\infty,0)$ to all of $\R$, this can be written as
$$ v^{\eps} 
   = K * (f^{\eps} + g^{\eps}  + \nu A u^{\eps}) $$ 
on  $[0, \infty)$  where the kernel $K$ is given by 
\begin{align}\label{eq:K-kernel} 
  K(t) = 
  \left\{
  \begin{array}{ll}
  {\eps^{-1}}{\e^{t/\eps}} \quad&\text{for} \  \
                                                        t\leq 0,\\
0 \quad&\text{for} \  \
                                                        t> 0.
 \end{array}
\right.
\end{align} 
This  entails  that 
\begin{align}\label{eq:v-eps-L2-bd}
  \| v^{\eps} \|_{L^2(0,\infty;V_s')}
  \le C  
\end{align}
since by Young's inequality and \eqref{eq:h-bd} we can estimate 
\begin{align*}
  \| K * (f^{\eps} + g^{\eps}) \|_{L^2(\R;V_s')}
  &\le \| K \|_{L^2(\R)} \| f^{\eps} + g^{\eps} \|_{L^1(\R;V_s')} 
  \le C 
\end{align*}
and 
\begin{align*}
  \| \nu K * A u^{\eps} \|_{L^2(\R;V_s')}
  &\le \| K \|_{L^1(\R)} \| \nu A u^{\eps} \|_{L^2(\R;V_s')} 
  \le C, 
\end{align*}
where we have used that $\| K \|_{L^1(\R)} = 1$ and $\| K \|_{L^2(\R)} = {1}/{\sqrt{2\eps}}$.  
As by  Proposition  \ref{lemma:a-priori-bds} and bound
\eqref{piu} 
one also has that  
\begin{align*}
  \| B(u^{\eps}) \|_{L^2(0,\infty;V_s')} 
  &\le C \| u^{\eps} \cdot \nabla u^{\eps} \|_{L^2(0,\infty;L^1(\Omega; \R^3 ))} \\ 
  &\le C \| u^{\eps} \|_{L^{\infty}(0,\infty;L^2(\Omega; \R^3 ))} \| \nabla u^{\eps} \|_{L^2(0,\infty;L^2(\Omega; \R^3 ))} 
  \le C, 
\end{align*}
the claim follows from this inequality and  \eqref{eq:v-eps-L2-bd}.  
\end{proof}

\section{Convergence}\label{sec:proof}

This section is eventually devoted to the proof of Theorem
\ref{theo:main}. Let
$u^{\eps}$ be a minimizer of $I^{\eps}$  on $U^\eps$  and
define $v^{\eps}$ as in \eqref{eq:v-def}. By  the a-priori
estimates of Propositions  \ref{lemma:a-priori-bds} and
\ref{prop:dualbounds} (recall that we assume here $\sigma>1/8$) there exists $u \in L^2(0,\infty;V) \cap L^{\infty}(0, \infty; H)$ with $\partial_t u \in L^2(0,\infty;V_s')$, $s > 5/2$, such that, for a subsequence (not relabeled), 
\begin{align}\label{eq:u-eps-conv} 
  u^{\eps} 
  \weakly u &\quad\mbox{in } L^2(0,\infty;V), \qquad 
  u^{\eps} 
  \weaklystar u \quad\mbox{in } L^{\infty}(0,\infty;H) 
\end{align} 
as well as 
\begin{align}\label{eq:partial-t-conv}
  \partial_t u^{\eps} 
  \weakly \partial_t u \quad\mbox{in } L^{2}(0,\infty;V_s'). 
\end{align} 
This, in particular, entails  that 
\begin{align}\label{eq:Au-conv}
  A u^{\eps} \weakly A u 
  \quad\mbox{in } L^2(0,\infty;V_s') 
\end{align}
 as well, since $A$ is bounded, see 
\eqref{eq:Au-bd}. Moreover, \eqref{eq:v-def}, \eqref{eq:f-bd}, and
\eqref{eq:g-bd} in combination with  Proposition  \ref{lemma:a-priori-bds} yield 
\begin{align} 
  \eps v^{\eps}  
  &\to 0 
  \quad\mbox{in } L^2(0,\infty;L^2(\Omega; \R^3)) \label{eq:eps-v-conv} \qquad \mbox{and} \\  
  f^{\eps} + g^{\eps} 
  &\to 0 \quad \mbox{in } L^1(0,\infty;V_s'). \label{eq:fg-conv}
\end{align}
By the Aubin-Lions Lemma we obtain $u^{\eps} \to u$ strongly in $L^2_{\rm loc}(0, \infty; H)$ so that \eqref{eq:u-eps-conv}, Proposition \ref{prop:dualbounds}, and $V_s \hookrightarrow L^{\infty}(\Omega)$ yield  
\begin{align}\label{eq:nonlin-lim-L1} 
  B(u^{\eps}) 
  \weakly B(u) 
  \quad\mbox{in } L^2(0,\infty;V_s'). 
\end{align}
From \eqref{eq:partial-t-conv} and \eqref{eq:nonlin-lim-L1} we thus get 
\begin{align}\label{eq:v-conv} 
  v^{\eps}   
  \weakly   \partial_t u +  B(u) 
  \quad\mbox{in } L^2(0,\infty;V_s').   
\end{align}

 Recall now  from Corollary \ref{cor:EL-strong} that
$u^{\eps}$ is a solution to \eqref{eq:euler-v-distr-weak} for any
$\varphi\in C^{\infty}_{\rm c}((0,\infty);V_s)$.  By using  \eqref{eq:eps-v-conv}, \eqref{eq:v-conv}, \eqref{eq:fg-conv}, and \eqref{eq:Au-conv} we can now pass to the limit $\eps \to 0$ in \eqref{eq:euler-v-distr-weak} and get 
\begin{align*}
  \int_0^{\infty} \langle  \partial_t u +  B(u)  +  \nu A u, \varphi \rangle \, dt
  = 0 
\end{align*}
for any $\varphi\in C^{\infty}_c((0,\infty);V_s)$ and so 
\begin{align}\label{eq:NSE-strong}
  \partial_t u + \ B(u) + \nu A u 
  = 0 
\end{align}
in $L^2(0, \infty; V_s')$ and a.e.\ in time.   

Since $W^{1,1}(0,T,V_s') \hookrightarrow C([0,T], V_s')$ for any $T >
0$, we also have $u^{\eps} \weakly u$ in $C([0,T], V_s')$ and   $u$
satisfies the initial condition $u(0) = u_0$ because of
\eqref{eq:u-0-bd}. As $u \in L^2(0,\infty;V)$, the bounds
\eqref{eq:Au-bd} and \eqref{eq:B-bd} entail that $Au \in
L^2(0,\infty;V')$ and $B(u)\in L^1(0,\infty;V')$, respectively. By
comparison in \eqref{eq:NSE-strong} one has that $\partial_t u \in
L^1(0,\infty;V')$ as well so that the equation holds in $V'$ for a.a.
times and $u$ is indeed a Leray-Hopf
solution. 

 Eventually,  the energy inequality  \eqref{eq:ei}
 follows  by  lower
semicontinuity when passing to the limit $\eps \to 0$ in 
inequality 
\eqref{eq:unif-bd}  by using the convergence $\ue_0 \to u_0$ in
$H$ from  \eqref{eq:u-0-bd}, $u^{\eps}(T) \weakly u(T)$
in $L^2(\Omega; \R^3)$,  and  $(1 - \e^{-t/\eps})^{1/2} \nabla u^{\eps} \weakly
\nabla u$ in $L^2(0, T; L^2(\Omega; \R^{3\times 3} ))$.

\section*{Acknowledgements}

U.S. is partly funded by the Vienna Science and Technology Fund (WWTF)
through Project MA14-009 and  by the Austrian Science Fund (FWF)
projects F\,65, P\,27052, and I\,2375.


\bibliographystyle{alpha}

\def\cprime{$'$} \def\cprime{$'$}

\end{document}